\theoremstyle{plain}
\renewcommand\thefigure{\thesection.\@arabic\c@figure}
\newtheorem{thm}{\bf Theorem}
\newtheorem{cor}{\bf Corollary}
\newtheorem{prop}{Proposition}[section]
\newtheorem{lmm}{\bf Lemma}
\theoremstyle{remark}
\newtheorem{rem}{\bf Remark}[section]
\def \epsilon {{\varepsilon}}
\definecolor{bgblue}{rgb}{0.04,0.39,0.54}
\definecolor{lired}{rgb}{0.3, 0.0, 0.0}
\definecolor{ligreen}{rgb}{0.0, 0.3, 0.0}
\definecolor{liblue}{rgb}{0.9, 1.0, 1.0}
\definecolor{gray}{rgb}{0.6, 0.6, 0.6}
\definecolor{sky}{rgb}{0.3, 1.0, 1.0}
\definecolor{bunhong}{rgb}{1.0, 0.3, 1.0}
\definecolor{yellow}{rgb}{0.97, 1, 0.0}
\definecolor{liyellow}{rgb}{0.9, 0.8, 0.0}
\definecolor{cengse}{rgb}{0.00,0.40,0.29}
\newcommand{\bs}[1]{\boldsymbol{#1}}
\renewcommand \wedge \times
\newcommand{\tensor}[1]{\overline {\overline{#1}}}
\begin{document}
\bibliographystyle{plain}

{\title[Rotational discrete gradient method for Oseen-Frank gradient flows] {A second-order length-preserving and unconditionally energy stable rotational discrete gradient method for Oseen-Frank gradient flows}
\author[
	J. Xu,\,    X. Yang\,  $\&$\,  Z. Yang
	]{
		\;\; Jie Xu${}^{1}$,   \;\;  Xiaotian Yang${}^{2}$ \;\; and\;\; Zhiguo Yang${}^{2,*}$
		}
	\thanks{${}^{*}$ Corresponding author.
	\\
	\indent ${}^{1}$ LSEC and NCMIS, Institute of Computational Mathematics and Scientific/Engineering Computing (ICMSEC), Academy of Mathematics and Systems Science (AMSS), Chinese Academy of Sciences, Beijing, China. Emails: xujie@lsec.cc.ac.cn (J. Xu).
		 \\
		\indent${}^{2}$ School of Mathematical Sciences, MOE-LSC and CMA-Shanghai, Shanghai Jiao Tong University, Shanghai 200240, China. Email: yang$\_$xiaotian@sjtu.edu.cn (X. Yang) and yangzhiguo@sjtu.edu.edu (Z. Yang)
		}
		}

\keywords{Nematic liquid crystal,  Oseen-Frank gradient flow, Energy stability, Length-preservation,  Rotational discrete gradient method} \subjclass[2000]{65N35, 65N22, 65F05, 35J05}

\begin{abstract}
We present a second-order strictly length-preserving and unconditionally energy-stable rotational discrete gradient (Rdg) scheme for the numerical approximation of the Oseen-Frank gradient flows with anisotropic elastic energy functional.  Two essential ingredients of the Rdg method are reformulation of the length constrained gradient flow into an unconstrained rotational form and discrete gradient discretization for the energy variation. Besides the well-known mean-value and Gonzalez discrete gradients, we propose a novel Oseen-Frank discrete gradient, specifically designed for the solution of Oseen-Frank gradient flow. We prove that the proposed Oseen-Frank discrete gradient satisfies the energy difference relation, thus the resultant Rdg scheme is energy stable. Numerical experiments demonstrate the efficiency and accuracy of the proposed Rdg method and its capability for providing reliable simulation results with highly disparate elastic coefficients.

\end{abstract}
 \maketitle

\section{Introduction}
The dynamics of liquid crystals involve the evolution of local anisotropy generated by nonuniform orientational distribution.
For uniaxial nematics, the equilibrium orientational distribution is axisymmetric and is allowed to rotate freely as a whole. 
A simplified setting for the dynamics of uniaxial nematics is to assume that the local anisotropy is kept at the equilibrium state and only rotations of the state is allowed.
Under this rationale, the local anisotropy is sufficiently described by a unit vector field $\bs{n}(\bs{x})$.
The well-known Ericksen-Leslie model \cite{ericksen1961conservation,leslie1968some} couples the velocity and the unit vector field.
The evolution of the vector field is given by 
\begin{subequations}\label{eq: elmodel}
\begin{align}
&\bs{n}\times \Big(   \gamma_1\big( \bs n_t +\bs v\cdot \nabla \bs n+\tensor{\Omega}\cdot \bs n \big)+\gamma_2 \tensor{\tau} \cdot \bs n +\frac{\delta \mathcal{F}[\bs n]}{\delta \bs{n}} \Big)=0, \label{eq: directeq}\\
& |\bs n|=1. \label{eq: unit}
\end{align}
\end{subequations}
In the above,
$\tensor{\Omega}= (\nabla \bs v-\nabla \bs v^{\intercal})/2$, and $\tensor{\tau}= (\nabla \bs v+\nabla \bs v^{\intercal})/2 $, where $\bs{v}$ is the velocity of the fluid governed by the Navier--Stokes equation that we do not write down here. 
The force by the interaction of local anisotropy is characterized by variational derivative of the Oseen--Frank energy \cite{Oseen1933,Frank1958}, 
\begin{equation}\label{eq: osfrank}
\mathcal{F}[\bs n]=\frac{1}{2}\int_{\Omega}k_1(\nabla \cdot \bs{n})^{2}+k_2|\bs{n}\cdot (\nabla \times \bs{n})|^{2}+k_3|\bs{n}\times (\nabla \times \bs{n})|^{2}+(k_2+k_4)\big[{\rm tr}\big((\nabla \bs{n})^{2}\big)-(\nabla \cdot \bs{n})^{2} \big]dV.
\end{equation}
The first three terms can be explained as excess energy density for three typical deformations specifically for unit vector fields: splay, twist and bend \cite{deGennesProst1993}.
Therefore, it is the three constants $k_1,k_2,k_3$ that characterize the elasticity of a certain material. 
They are closely related to physical parameters, whose relative magnitudes may vary in a wide range by previous experimental or theoretical results \cite{Frederiks,Frederiks_prl1969,JPCB2010,kapanowski1997statistical,ECExp_pre2012,ECExp_pre2011,SoftM2013,ECExp_pre2010_2,han2015from,xu2018tensor,xu2018calculating,xu2018onsager,Li2023frame}. 
The last term can be rewritten as a surface integral, which is usually not considered as it vanishes under periodic or some other commonly adopted boundary conditions.

When the velocity is small, one may approximate by assuming $\bs{v}=0$, so that the Ericksen-Leslie model is reduced to
\begin{subequations}\label{eq: gfmodel}
\begin{align}
&\bs{n}\times \bs n_t=- \bs n \times \frac{\delta \mathcal{F}[\bs n]}{\delta \bs{n}}, \label{eq: gflow}\\
& |\bs n|=1, \label{eq: unit2}
\end{align}
\end{subequations}
where $\gamma_1$ is also eliminated with suitable rescaling.
This equation can be interpreted as a gradient flow driven by the Oseen--Frank energy with an explicit length constraint, which we shall clarify later.
Notably, it is desirable for a numerical scheme to keep the vector length and the energy dissipation.
In particular, the deviation of vector length would bring ambiguity when explaining the results because the pictures of splay, twist and bend terms are no longer valid for vector fields with varying lengths. 
However, both the $k_i$ terms and the length constraint give rise to strong nonlinearity that is not easy to handle. 

Most of the previous works on the analysis and numerical methods of nematic liquid crystal problems study the isotropic elasticity (see e.g. \cite{lin_liu1995, alouges1997new, Liu_and_Walkington-2000, Du_Guo_and_Shen-2001, Feng_and_Prohl-2004, Guill2013,Zhao_Yang_Li_and_Wang-2016}), i.e. $k_1=k_2=k_3=k$ and $k_4=0$ making $\mathcal{F}[\bs n]$ reduce to the squared gradient, 
\begin{equation}\label{eq: DirEnergy}
\mathcal{F}_D[\bs n]=\frac{k}{2}\int_{\Omega}|\nabla \bs{n}|^{2}dV.
\end{equation}
There are only limited studies on numerical methods for anisotropic elasticity (see \cite{ramage2013preconditioned, adler_et_al-2015}).
Even assuming the isotropic elasticity, to the best of the authors' knowledge, in existing works the length contraint and energy dissipation are rarely satisfied simultaneously. 
%
%
%
One popular choice is to relax the model by substituting the explicit length contraint with a penalty term in the energy \cite{Liu_and_Walkington-2000, Du_Guo_and_Shen-2001, Feng_and_Prohl-2004, Guill2013,Zhao_Yang_Li_and_Wang-2016}. 
As a result, we obtain an equation of the Ginzburg--Landau type, 
\begin{equation}\label{eq: GinzLaudau}
\bs n_t=-k \Delta \bs n+F'_{\epsilon}(\bs n),\quad F_{\epsilon}(\bs n)=\big(|\bs n|^2-1 \big)^2/\big(4\epsilon^2 \big).
\end{equation}
We can see that the nonlinearity on length constraint is converted into a nonlinear term in the energy, and the spatial derivative term is linear when the isotropic elasticity is assumed. 
The problem then becomes standard as one only needs to focus on energy dissipation, for which various techniques are available, including convex splitting \cite{Elliott_and_Stuart-1993,Eyre-1998}, linear stablization \cite{Shen_Yang-2010, Cai_Shen_and_Xu-2017}, and auxiliary variable such as IEQ \cite{Yang-2016, Zhao_Yang_Li_and_Wang-2016} and SAV \cite{Shen_Xu_Yang-2018,Shen_Xu_Yang-2019}. 
However, it is worthwhile to note that the relaxation allows the vector length to deviate. 
To date, it is still not definite how such a relaxation approximate the original Oseen--Frank gradient flow.
On the other hand, some works put more emphasis on the unit-vector constraint.
One simple idea is projection \cite{E_and_Wang-2001}, which follows the predictor-corrector routine. Ignoring the unit-vector constraint, one firstly obtains in the prediction step an intermediate approximate solution, followed by the normalization on the vector length.
Despite its simplicity, it is highly nontrivial to extend the projection method to high-order schemes.
Another straightforward approach is to adopt the spherical coordinates method by representing the vector field in terms of the orientation angles
$ \bs n=\cos (\theta) \cos (\phi) \bs e_x +\cos (\theta) \sin (\phi) \bs e_y +\sin(\theta) \bs e_z $, and to numerically evolve the angles. 
The unit-vector constraint can then be naturally guaranteed. 
However, as pointed out in \cite{ramage2013preconditioned}, such a representation degenerates when $\cos (\theta)=0$ and the orientation angle is discontinuous across $\theta+\pi$ and $\phi+2\pi$.
In addition, the energy dissipation cannot be guaranteed theoretically mainly because the energy takes a form much more complicated. 
It is also possible to introduce a Lagrange multiplier for the length constraint, which transforms the original equation into a saddle-point form \cite{Badia2011a, badia2011overview}. 
Currently, this approach requires solving within the framework of mixed elements, which is designed specially for the isotropic squared gradient term. 
To investigate and compare the dynamics of nematics formed by different materials, it calls for reliable and efficient schemes to deal with anisotropic elasticity, but it turns out that the existing approaches are far from satisfactory.

In this paper, we propose a novel second-order rotational discrete gradient (Rdg) scheme for the Oseen-Frank gradient flows with anisotropic elasticity, which is strictly length-preserving meanwhile unconditionally energy stable.
This is achieved by reformulating the gradient flow with unit-vector constraint into an unconstrained rotational form and dealing with the variational derivative via discrete gradients.
For functional of multiple variables, the construction of discrete gradient is not unique. 
We compare three different discrete gradients in the method we propose, including the widely-used mean-value and Gonzalez discrete gradients, together with one particularly designed for the Oseen-Frank energy.
Although theoretically they all satisfy the two desired properties, in practice nonlinear equations need to be solved, leading to difference in performance. 
Our numerical tests indicate that the specially designed Oseen-Frank discrete gradient outperforms the other two discrete gradients in length error and efficiency. 
On the other hand, we examine the dynamics with different elastic constants, and find that anisotropic elasticity can lead to completely different dynamics. 
Since the Oseen--Frank gradient flow characterizes the dissipation of the unit vector field in the Ericksen--Leslie model, the proposed method is expected to be a cornerstone for efficient and robust schemes of the Ericksen--Leslie equation. 

The rest of the paper is organized as follows. 
In Section \ref{sect: rdg}, we introduce an equivalent unconstrained rotational form of general gradient flows with unit-vector constraint. We then present the Rdg scheme for temporal discretization of the reformulated system and rigorously prove that it is strictly length preserving and unconditionally energy stable. A specifically designed second-order Oseen-Frank discrete gradient approximation for the energy variation is presented. Efficient time-adaptive strategy is also discussed. In Section \ref{sect: num}, we conduct ample numerical experiments of Oseen-Frank gradient flow involving highly disparate elastic coefficients to demonstrate the accuracy, efficiency, exact length preservation and energy stability of Rdg method. Concluding remarks are given in Section \ref{sect: concl}.


\section{The rotational discrete gradient method}\label{sect: rdg}

In this section, we shall develop an unconditionally energy-stable and length-preserving rotational discrete gradient method (Rdg) for the system \eqref{eq: gfmodel}.
The proposed method is constructed upon the rotational form of the system \eqref{eq: gfmodel} where the length constraint becomes intrinsic.
The rotational form leads to a natural second-order time discretization that maintains the vector length while keeping the structure of the gradient flow.
As a fine result, the techniques for energy dissipative schemes can be built in.
Since the discretization for the rotational form is already nonlinear, it is meaningless to pursue linear discretization of the variational derivative. 
Thus, we choose to construct discrete gradients that are second-order accurate in time to arrive at the rotational discrete gradient method. 
This method can be applied to general length-constrained gradient flows without major difficulty. 
Below, we first write down the rotational form and the corresponding discretization, followed by discussions on discrete gradients. 

\subsection{Rotational form}
We begin with writing down the rotational form and showing the equivalence with system \eqref{eq: gfmodel}. 
\begin{prop}
The system \eqref{eq: gfmodel} is equivalent to the following unconstrained equation 
\begin{equation}\label{rotform}
\bs n_t=- \Big( \bs n\times    \frac{\delta \mathcal{F}[\bs n]}{\delta \bs n} \Big)\times \bs n ,
\end{equation}
provided that the initial vector field is of unit length, i.e. $|\bs n(\bs x,0)|=1.$
\end{prop}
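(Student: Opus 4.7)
The plan is to prove the two directions of equivalence using the vector triple product identity $\bs a \times (\bs b \times \bs c) = (\bs a \cdot \bs c)\bs b - (\bs a \cdot \bs b)\bs c$, combined with the observation that differentiating $|\bs n|^2=1$ in time yields $\bs n \cdot \bs n_t = 0$.

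For the forward direction, I would assume \eqref{eq: gfmodel} holds and take the cross product of \eqref{eq: gflow} with $\bs n$ from the left. On the left-hand side, the triple product expands as
\begin{equation*}
\bs n \times (\bs n \times \bs n_t) = (\bs n \cdot \bs n_t)\bs n - |\bs n|^2 \bs n_t = -\bs n_t,
\end{equation*}
since $|\bs n|=1$ forces $\bs n \cdot \bs n_t = 0$. On the right-hand side one simply rewrites $\bs n \times (\bs n \times \bs g) = -(\bs n \times \bs g)\times \bs n$ with $\bs g = \delta \mathcal{F}/\delta \bs n$, yielding \eqref{rotform}.

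For the converse direction, I would start from \eqref{rotform} without assuming the pointwise unit-length constraint, and first establish that $|\bs n|$ is preserved in time. Dotting \eqref{rotform} with $\bs n$ gives
\begin{equation*}
\frac{1}{2}\frac{d}{dt}|\bs n|^2 = \bs n \cdot \bs n_t = -\bs n \cdot \bigl[(\bs n \times \bs g)\times \bs n\bigr] = -(\bs n\times \bs g)\cdot(\bs n \times \bs n) = 0,
\end{equation*}
so that $|\bs n(\bs x,t)| = |\bs n(\bs x,0)| = 1$ by the initial assumption. Once the unit-length property is secured, I would cross both sides of \eqref{rotform} with $\bs n$ from the left. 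Setting $\bs A := \bs n \times \bs g$, which satisfies $\bs A \cdot \bs n = 0$ automatically, the triple-product identity gives
\begin{equation*}
\bs n \times (\bs A \times \bs n) = |\bs n|^2 \bs A - (\bs n \cdot \bs A)\bs n = \bs A = \bs n \times \bs g,
\end{equation*}
and therefore $\bs n \times \bs n_t = -\bs n \times \bs g$, recovering \eqref{eq: gflow}, while \eqref{eq: unit} follows from the length preservation step.

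I do not anticipate any real obstacles here: both directions reduce to one application each of the BAC--CAB identity together with the orthogonality $\bs n \perp \bs n_t$. The only subtle point worth highlighting is that the unit-length constraint in the constrained form comes for free in the unconstrained form only because the initial data is of unit length and the right-hand side of \eqref{rotform} is tangential to the sphere $|\bs n|=1$ at every point; this is precisely why the hypothesis on the initial datum is indispensable in the proposition.
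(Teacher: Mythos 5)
Your proof is correct and follows essentially the same route as the paper: both directions reduce to the triple-product identity together with the orthogonality $\bs n\cdot\bs n_t=0$, the only cosmetic difference being that you cross from the left where the paper crosses from the right. In the converse direction your ordering is in fact slightly cleaner, since you establish $|\bs n|\equiv 1$ \emph{before} the step $\bs n\times\big((\bs n\times\bs g)\times\bs n\big)=|\bs n|^2(\bs n\times\bs g)$, whereas the paper silently drops the $|\bs n|^2$ factor at that point and only derives length preservation afterwards.
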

\begin{proof}
On one hand, we start from \eqref{eq: gfmodel} to derive \eqref{rotform}. 
Using the identity of double cross product, we have 
\begin{equation}\label{eq:aba}
(\bs a\times \bs b) \times \bs a=|\bs{a}|^2\bs b-(\bs b\cdot \bs a)\bs a=\bs b \cdot(|\bs{a}|^2I-\bs a\otimes \bs a),\quad \forall \bs a,\bs b\in \mathbb{R}^3.
\end{equation}
Taking the right cross product of $\bs n$ on both sides of \eqref{eq: gflow}, we obtain 
\begin{equation}\label{eq: rotformt1}
\bs n_t-\frac{1}{2} \big(|\bs n|^2 \big)_t\, \bs n=-  \Big(  \bs n\times  \frac{\delta \mathcal{F}[\bs n]}{\delta \bs n} \Big)  \times \bs n   .
\end{equation}
The constraint \eqref{eq: unit2} implies $\big(|\bs n|^2 \big)_t=0$, so that we arrive at \eqref{rotform}.


On the other hand, one can recover \eqref{eq: gfmodel} from \eqref{rotform}.
Taking the left cross product by $\bs n$ on \eqref{rotform}, we deduce that 
\begin{equation}
\bs n \times \bs n_t=-\bs n \times \Big( \bs n\times    \frac{\delta \mathcal{F}[\bs n]}{\delta \bs n} \Big)\times \bs n=-\bigg( \bs n \times  \frac{\delta \mathcal{F}[\bs n]}{\delta \bs n} -\Big((\bs n \times \bs n)\cdot  \frac{\delta \mathcal{F}[\bs n]}{\delta \bs n}\Big) \bs n    \bigg) =- \bs n \times  \frac{\delta \mathcal{F}[\bs n]}{\delta \bs n},
\end{equation}
for which we have utilized \eqref{eq:aba} and the following identities
\begin{equation}\label{eq: eleid}
\bs a \times \bs b \cdot \bs c=\bs c \times \bs a \cdot \bs b,\quad \bs a\times \bs a=0, \quad \forall \bs a,\bs b,\bs c \in \mathbb{R}^3.
\end{equation}
Then, we take the dot product with $\bs n$ on \eqref{rotform}, which yields  
\begin{equation*}
\frac{1}{2} \big(|\bs n|^2 \big)_t=- \Big( \bs n\times    \frac{\delta \mathcal{F}[\bs n]}{\delta \bs n} \Big)\times \bs n \cdot \bs n=0,
\end{equation*}
where identities in \eqref{eq: eleid} have also been used.
Together with the initial condition that $|\bs n(\bs x,0)|=1$, the length constraint is derived, so that we obtain the original constrained system \eqref{eq: gfmodel}.
\end{proof}

Next, let us recognize the gradient flow structure from the rotational form.
The right-hand side of \eqref{rotform} is actually the opposite of variational derivative under the constraint $|\bs{n}|=1$. 
Actually, let us consider a general energy functional $\mathcal{E}[\bs{v}(\bs{x})]$ under the constraint $|\bs{v}|=C$ is constant.
Choose a parameterized curve $\bs{v}(\bs{x},s)$ such that $|\bs{v}(\bs{x},s)|=C$ and $\bs{v}(\bs{x},0)=\bs{v}$. 
The definition of variational derivative without constraint yields
\begin{equation}
  \lim_{s\to 0}\frac{\mathcal{E}[\bs{v}(\bs{x},s)]-\mathcal{E}[\bs{v}(\bs{x},0)]}{s}=\int_{\Omega}\frac{\delta\mathcal{E}[\bs{v}]}{\delta\bs{v}}\cdot\frac{d\bs{v}}{ds}dV. 
\end{equation}
Taking the derivative w.r.t. $s$ on $|\bs{v}(\bs{x},s)|^2=C^2$ yields
\begin{equation}
  \bs{v}\cdot \frac{d\bs{v}}{ds}=0. 
\end{equation}
Therefore, 
\begin{align}
  \int_{\Omega}\frac{\delta\mathcal{E}[\bs{v}]}{\delta\bs{v}}\cdot\frac{d\bs{v}}{ds}dV
  =&\,\int_{\Omega}\frac{\delta\mathcal{E}[\bs{v}]}{\delta\bs{v}}\cdot \Big(I-\frac{\bs{v}}{|\bs{v}|}\otimes \frac{\bs{v}}{|\bs{v}|} \Big)\frac{d\bs{v}}{ds}dV\nonumber\\
  =&\,\int_{\Omega}\Big(I-\frac{\bs{v}}{|\bs{v}|}\otimes \frac{\bs{v}}{|\bs{v}|} \Big)\frac{\delta\mathcal{E}[\bs{v}]}{\delta\bs{v}}\cdot\frac{d\bs{v}}{ds}dV\nonumber\\
  =&\,\int_{\Omega}\Big(\frac{\bs{v}}{|\bs{v}|}\times \frac{\delta\mathcal{E}[\bs{v}]}{\delta\bs{v}} \Big)\times \frac{\bs{v}}{|\bs{v}|}\cdot\frac{d\bs{v}}{ds}dV,
\end{align}
where for the last equality we have used \eqref{eq:aba}. 
Note that $\big(I-\frac{\bs{v}}{|\bs{v}|}\otimes \frac{\bs{v}}{|\bs{v}|} \big)\frac{\delta\mathcal{E}[\bs{v}]}{\delta\bs{v}}=\big(\frac{\bs{v}}{|\bs{v}|}\times \frac{\delta\mathcal{E}[\bs{v}]}{\delta\bs{v}} \big)\times \frac{\bs{v}}{|\bs{v}|}$ lies within the tangent space of the surface $|\bs{v}|=C$, it indeed gives the variational derivative under the constraint. 

The two properties of \eqref{rotform} is summarized in the following theorem. 
\begin{thm}
The gradient flow \eqref{rotform} with initial condition $|\bs n(\bs x,0)|=1$ obeys two fundamental physical properties, i.e.
\begin{itemize}
\item[(1)] the length constraint $|\bs n(\bs x ,t)|=1$ for $t\geq0$;
\item[(2)] the energy dissipation law 
\begin{equation}\label{eq: dislawcont}
\frac{d\mathcal{F}[\bs n]}{dt}=-\int_{\Omega} \Big|   \frac{\delta \mathcal{F}[\bs n]}{\delta \bs n}\times \bs n   \Big|^2 dV\leq 0.
\end{equation}
\end{itemize}
\end{thm}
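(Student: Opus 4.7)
The plan is to verify the two properties directly from the rotational form \eqref{rotform}, using the same vector identities \eqref{eq:aba} and \eqref{eq: eleid} that powered the equivalence proposition, so no new machinery is needed.

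For the length constraint (1), I would take the dot product of \eqref{rotform} with $\bs n$ and show the right-hand side annihilates. Specifically, for any vectors $\bs a,\bs b \in \mathbb{R}^3$, the scalar triple product identity in \eqref{eq: eleid} gives $\big((\bs a \times \bs b)\times \bs a\big)\cdot \bs a = 0$, because $(\bs a \times \bs b)\times \bs a$ is orthogonal to $\bs a$. Applying this with $\bs a = \bs n$ and $\bs b = \delta\mathcal{F}/\delta\bs n$ yields $\bs n \cdot \bs n_t = 0$, so $\tfrac{d}{dt}|\bs n|^2 = 0$, and the initial condition $|\bs n(\bs x, 0)| = 1$ propagates to $|\bs n(\bs x, t)| = 1$ for all $t\ge 0$.

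For the energy dissipation (2), I would differentiate $\mathcal{F}[\bs n]$ in $t$ and use \eqref{rotform} to substitute $\bs n_t$:
\begin{equation*}
\frac{d\mathcal{F}[\bs n]}{dt} = \int_\Omega \frac{\delta \mathcal{F}[\bs n]}{\delta \bs n}\cdot \bs n_t \, dV = -\int_\Omega \frac{\delta \mathcal{F}[\bs n]}{\delta \bs n}\cdot \Big(\big(\bs n \times \tfrac{\delta \mathcal{F}[\bs n]}{\delta \bs n}\big)\times \bs n\Big) dV.
\end{equation*}
Setting $\bs f := \delta \mathcal{F}[\bs n]/\delta \bs n$ and applying the triple product identity in \eqref{eq: eleid} twice, the integrand simplifies as
\begin{equation*}
\bs f \cdot \big((\bs n \times \bs f)\times \bs n\big) = (\bs n \times \bs f)\cdot (\bs n \times \bs f) = |\bs n \times \bs f|^2 = \Big|\tfrac{\delta \mathcal{F}[\bs n]}{\delta \bs n}\times \bs n\Big|^2,
\end{equation*}
which delivers exactly \eqref{eq: dislawcont} with the correct nonpositive sign.

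Neither step is a real obstacle — both reduce to straightforward vector algebra on the pointwise identity \eqref{rotform}. The only mild subtlety worth flagging is justifying $\tfrac{d}{dt}\mathcal{F}[\bs n] = \int_\Omega (\delta \mathcal{F}/\delta \bs n)\cdot \bs n_t\, dV$, i.e. that the boundary contributions arising when one integrates by parts in the variational derivative of the Oseen--Frank functional \eqref{eq: osfrank} vanish. I would simply remark that under periodic (or other standard) boundary conditions those surface terms drop out, matching the setting already assumed in the introduction for the $(k_2+k_4)$ term.
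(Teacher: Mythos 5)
Your proof is correct and follows essentially the same route as the paper: dot the rotational form with $\bs n$ for the length constraint, and pair it with $\delta\mathcal{F}/\delta\bs n$ plus the scalar triple product identity \eqref{eq: eleid} for the dissipation law. Your extra remark that the identity $\frac{d}{dt}\mathcal{F}[\bs n]=\int_\Omega(\delta\mathcal{F}/\delta\bs n)\cdot\bs n_t\,dV$ relies on boundary terms vanishing is a reasonable clarification consistent with the paper's periodic setting.
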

\begin{proof}
The length constraint has been shown above. 
For the energy dissipation, we take inner product with $ {\delta \mathcal{F}[\bs n]}/{\delta \bs n}$ on \eqref{rotform} to obtain
\begin{equation}
\frac{d\mathcal{F}[\bs n]}{dt}=-\int_{\Omega}\frac{\delta \mathcal{F}[\bs n]}{\delta \bs n}  \cdot \bs n\times \Big(   \frac{\delta \mathcal{F}[\bs n]}{\delta \bs n}\times \bs n   \Big)dV=-\int_{\Omega}    \Big|\frac{\delta \mathcal{F}[\bs n]}{\delta \bs n}\times \bs n \Big|^2   dV\leq 0,
\end{equation}
where the elementary identity \eqref{eq: eleid} has been invoked. 
\end{proof}

\subsection{Length-preserving and energy stable discretization}
We now propose a second-order unconditionally energy stable and length-preserving scheme for the rotational form \eqref{rotform} of the gradient flow. 
Denote by $\tau$ the time step, and use the superscript $m$ to represent the approximation at the time $t_m$. 


Based on the rotational form \eqref{rotform}, we consider the following discretization: 
\begin{equation}\label{Rdgscheme}
\frac{\bs n^{m+1}-\bs n^m}{\tau}=-\bs n^{m+\frac{1}{2}}\times \Big(  D_{\mathcal{F}}(\bs n)\Big|^{m+\frac{1}{2}} \times \bs n^{m+\frac{1}{2}}   \Big),
\end{equation}
where $\bs n^{m+1/2}=(\bs n^{m+1}+\bs n^m)/2$, and $D_{\mathcal{F}}(\bs n)\Big|^{m+\frac{1}{2}}$ is a discretized variational derivative that will be discussed later. 
Regardless of how the $D_{\mathcal{F}}(\bs n)\Big|^{m+\frac{1}{2}}$ is constructed, the above discretization automatically keeps the vector length.
Indeed, we take dot product by $\bs{n}^{m+1/2}$. The right-hand side vanishes due to \eqref{eq:aba}, yielding
\begin{equation}
  |\bs{n}^{m+1}|^2-|\bs{n}^m|^2=0. 
\end{equation}
Therefore, we could focus on $D_{\mathcal{F}}(\bs n)\Big|^{m+\frac{1}{2}}$ targeting the energy dissipation. 
We consider the discretized variational derivative satisfying the following relation. 
\begin{equation}\label{eq: engdis0}
\int_{\Omega} D_{\mathcal{F}}(\bs n)\Big|^{m+\frac{1}{2}}\cdot (\bs n^{m+1}-\bs n^m) dV\ge\mathcal{F}[\bs n^{m+1}]-\mathcal{F}[\bs n^{m}].
\end{equation}

\begin{thm}\label{Rdgthm}
The scheme \eqref{Rdgscheme} satisfying \eqref{eq: engdis0} is length-preserving and energy stable, i.e. for $m=1,2,\cdots$, it holds
\begin{itemize}
\item[(1)] $|\bs n^{m+1} |=|\bs n^m |,\;\;$ 
\item[(2)] $\mathcal{F}[\bs n^{m+1}]-\mathcal{F}[\bs n^{m}]\le -\tau \Big \|   D_{\mathcal{F}}(\bs n)\big|^{m+\frac{1}{2}} \times \bs n^{m+\frac{1}{2}}    \Big \|^2 $.
\end{itemize}
\end{thm}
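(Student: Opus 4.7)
The plan is to verify the two assertions separately by exploiting the algebraic structure of the scheme together with the assumed energy difference relation \eqref{eq: engdis0}. Part (1) is purely algebraic in the cross-product structure of \eqref{Rdgscheme} and does not use $D_{\mathcal{F}}$ at all, so I would treat it first; part (2) will then follow by testing the scheme against $D_{\mathcal{F}}(\bs n)|^{m+\frac{1}{2}}$ and converting the triple product on the right-hand side into the claimed squared norm.

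For part (1), I would take the pointwise dot product of \eqref{Rdgscheme} with $\bs n^{m+\frac{1}{2}}$. Using $\bs n^{m+\frac{1}{2}}=(\bs n^{m+1}+\bs n^m)/2$, the left-hand side collapses to $\frac{1}{2\tau}\bigl(|\bs n^{m+1}|^2-|\bs n^m|^2\bigr)$. The right-hand side takes the form $-\bs n^{m+\frac{1}{2}}\cdot\bigl(\bs n^{m+\frac{1}{2}}\times \bs X\bigr)$ with $\bs X=D_{\mathcal{F}}(\bs n)|^{m+\frac{1}{2}}\times \bs n^{m+\frac{1}{2}}$, which vanishes pointwise by the scalar triple product identity $\bs a\cdot(\bs a\times \bs c)=0$ listed in \eqref{eq: eleid}. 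This yields $|\bs n^{m+1}|=|\bs n^m|$ at every $\bs x$, independent of how $D_{\mathcal{F}}$ is defined.

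For part (2), I would take the $L^2(\Omega)$ inner product of \eqref{Rdgscheme} with $D_{\mathcal{F}}(\bs n)|^{m+\frac{1}{2}}$. The left-hand side becomes $\frac{1}{\tau}\int_{\Omega}D_{\mathcal{F}}(\bs n)|^{m+\frac{1}{2}}\cdot(\bs n^{m+1}-\bs n^m)\,dV$, which by hypothesis \eqref{eq: engdis0} bounds $\frac{1}{\tau}\bigl(\mathcal{F}[\bs n^{m+1}]-\mathcal{F}[\bs n^m]\bigr)$ from above. On the right-hand side I would apply the cyclic identity $\bs a\cdot(\bs b\times \bs c)=\bs c\cdot(\bs a\times \bs b)$ with $\bs a=D_{\mathcal{F}}(\bs n)|^{m+\frac{1}{2}}$, $\bs b=\bs n^{m+\frac{1}{2}}$, $\bs c=D_{\mathcal{F}}(\bs n)|^{m+\frac{1}{2}}\times \bs n^{m+\frac{1}{2}}$ to rewrite the integrand as $\bigl|D_{\mathcal{F}}(\bs n)|^{m+\frac{1}{2}}\times \bs n^{m+\frac{1}{2}}\bigr|^2$. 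Multiplying through by $\tau$ then delivers the stated dissipation inequality.

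I do not expect any serious obstacle: both parts reduce to direct algebraic manipulations once \eqref{eq: engdis0} is granted, and the vector identities required are precisely those collected in \eqref{eq:aba} and \eqref{eq: eleid}. The only points I would double-check are the sign conventions, specifically that the inequality direction in \eqref{eq: engdis0} matches the sign produced by the cyclic identity so that the resulting bound on $\mathcal{F}[\bs n^{m+1}]-\mathcal{F}[\bs n^m]$ is genuinely nonpositive. The real difficulty of the paper sits not in this theorem but in the subsequent construction of a concrete $D_{\mathcal{F}}$, in particular the novel Oseen-Frank discrete gradient, that verifies \eqref{eq: engdis0} for the anisotropic functional \eqref{eq: osfrank}.
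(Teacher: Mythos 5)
Your proposal is correct and follows essentially the same route as the paper: part (1) by dotting the scheme with $\bs n^{m+\frac{1}{2}}$ so the right-hand side vanishes by the triple-product identities, and part (2) by testing against $D_{\mathcal{F}}(\bs n)\big|^{m+\frac{1}{2}}$, invoking \eqref{eq: engdis0}, and cycling the triple product into $\big|D_{\mathcal{F}}(\bs n)\big|^{m+\frac{1}{2}}\times \bs n^{m+\frac{1}{2}}\big|^2$. The sign bookkeeping you flag does work out, and your write-up is in fact more explicit than the paper's two-line proof.
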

\begin{proof}
The first property has been shown above. 
Using \eqref{eq: engdis0} and \eqref{eq: eleid}, we take the inner product between \eqref{rotform} and  $D_{\mathcal{F}}(\bs n)\Big|^{m+\frac{1}{2}}$, which directly leads to the desired energy dissipation law.
\end{proof}

\begin{rem}
  Theorem \ref{Rdgthm} does not rely on the specific form of the energy. It holds for general gradient flows constrained on a sphere. 
\end{rem}

What is remaining is to construct a $D_{\mathcal{F}}(\bs n)\Big|^{m+\frac{1}{2}}$ satisfying \eqref{eq: engdis0}. 
Since we no longer need to care about the constraint, the techniques for energy dissipation schemes can be adopted. 
The form of \eqref{Rdgscheme} indicates that the scheme is already nonlinear and could be second-order accurate in time. 
Therefore, we shall aim to obtain a second-order approximation $D_{\mathcal{F}}(\bs n)\Big|^{m+\frac{1}{2}}$ such that the equality holds for \eqref{eq: engdis0}, i.e. 
\begin{equation}\label{eq: DDprop}
\int_{\Omega} D_{\mathcal{F}}(\bs n)\Big|^{m+\frac{1}{2}}\cdot (\bs n^{m+1}-\bs n^m) dV=\mathcal{F}[\bs n^{m+1}]-\mathcal{F}[\bs n^{m}].
\end{equation}
To this end, the discrete gradient technique is the most suitable approach, and we would like to call it the rotational discrete gradient (Rdg) scheme. 

\subsection{Construction of discrete gradients}
In what follows, we consider three different kinds of discrete gradients, which satisfy the energy difference relation \eqref{eq: DDprop}.

\subsubsection{Common approaches}
There are two well-known approaches in the literature. 
The first is the mean-value discrete gradient \cite{harten1983upstream,celledoni2012preserving}:
\begin{equation}\label{eq:mv}
D^M_{\mathcal{F}}(\bs n)\Big|^{m+\frac{1}{2}}=\int_0^1 \frac{\delta \mathcal{F}}{\delta \bs n}\big[ (1-s)\bs n^{m+1}+s\bs n^m \big]ds.
\end{equation}
The relation \eqref{eq: DDprop} is deduced by
\begin{equation*}
\begin{aligned}
&\int_{\Omega}\int_0^1 \frac{\delta \mathcal{F}}{\delta \bs n}\big[ (1-s)\bs n^{m+1}+s\bs n^m \big]\cdot (\bs n^{m+1}-\bs n^m)ds\, dV\\
&=-\int_{\Omega}\int_0^1 \frac{d \mathcal{F}}{ds}\big[ (1-s)\bs n^{m+1}+s\bs n^m \big]ds \,dV=\mathcal{F}[\bs n^{m+1}]-\mathcal{F}[\bs n^{m}].
\end{aligned}
\end{equation*}
The implementation of the mean-value discrete gradient relies on numerical integration. For instance, one can adopt the Gauss quadrature as follows
\begin{equation}
D^M_{\mathcal{F}}(\bs n)\Big|^{m+\frac{1}{2}}\approx \frac{1}{2}\sum_{k=1}^{N_g} w_k  \frac{\delta \mathcal{F}}{\delta \bs n}\Big[ \frac{1}{2}(\bs n^m +\bs n^{m+1}) +\frac{\xi_k}{2} (\bs n^m-\bs n^{m+1}) \Big],
\end{equation}
where $\{\xi_k,w_k \}_{k=1}^{N_g}$ are the Legendre-Gauss points and weights, respectively. Thus, in practice, the mean-value discrete gradient preserves the discrete energy dissipation law up to a Gauss numerical quadrature error. In addition, the evaluation of numerical integration requires calculating $N_g$ energy variations. Even if we choose $N_g=2$, it results in a substantial increase in the computational cost in each iteration when we solve the nonlinear scheme. This deficiency will be demonstrated clearly through numerical experiments in Section \ref{sect: num}. 

The second is the Gonzalez discrete gradient \cite{gonzalez2000time}:
\begin{equation}\label{eq: Gondg}
D^G_{\mathcal{F}}(\bs n)\Big|^{m+\frac{1}{2}}=\frac{ \mathcal{F}[\bs n^{m+1}]- \mathcal{F}[\bs n^{m}]-\int_{\Omega} \frac{\delta \mathcal{F}}{\delta \bs n}[\bs n^{m+\frac{1}{2}}] \cdot (\bs n^{m+1}-\bs n^m) dV  }{\int_{\Omega} \big(\bs n^{m+1}-\bs n^m\big) \cdot \big(\bs n^{m+1}-\bs n^m \big)dV }(\bs n^{m+1}-\bs n^m)+\frac{\delta \mathcal{F}}{\delta \bs n}[\bs n^{m+\frac{1}{2}}].
\end{equation}
It is straightforward to verify that
\begin{equation*}
\begin{aligned}
&\int_{\Omega}D^G_{\mathcal{F}}(\bs n)\Big|^{m+\frac{1}{2}}\cdot(\bs{n}^{m+1}-\bs{n}^m)dV\nonumber\\
&=\frac{ \mathcal{F}[\bs n^{m+1}]- \mathcal{F}[\bs n^{m}]-\int_{\Omega} \frac{\delta \mathcal{F}}{\delta \bs n}[\bs n^{m+\frac{1}{2}}] \cdot (\bs n^{m+1}-\bs n^m) dV  }{\int_{\Omega} \big|\bs n^{m+1}-\bs n^m\big|^2dV }\int_{\Omega}|\bs n^{m+1}-\bs n^m|^2 dV\nonumber\\
&\;\;\;\;+\int_{\Omega}\frac{\delta \mathcal{F}}{\delta \bs n}[\bs n^{m+\frac{1}{2}}]\cdot (\bs n^{m+1}-\bs n^m)dV\\
&=\mathcal{F}[\bs n^{m+1}]-\mathcal{F}[\bs n^{m}]. 
\end{aligned}
\end{equation*}
For the Gonzalez's discrete gradient, the computational cost is lower than the mean-value discrete gradient. 
However, the Gonzalez's discrete gradient is more sensitive to roundoff errors and suffers from convergence issue, as will be shown in Section \ref{sect: num}. Actually, one can aware that when the gradient flow system tends to equilibrium, the term $\int_{\Omega} \big(\bs n^{m+1}-\bs n^m \big) \cdot \big(\bs n^{m+1}-\bs n^m \big)dV$ on the denominator of equation \eqref{eq: Gondg} may become rather small, causing severe loss of significant digits. Practically, one often modifies equation \eqref{eq: Gondg} by
\begin{equation}\label{eq: Gondgnew}
D^G_{\mathcal{F}}(\bs n)\Big|^{m+\frac{1}{2}}=\frac{ \mathcal{F}[\bs n^{m+1}]- \mathcal{F}[\bs n^{m}]-\int_{\Omega} \frac{\delta \mathcal{F}}{\delta \bs n}[\bs n^{m+\frac{1}{2}}] \cdot (\bs n^{m+1}-\bs n^m) dV  }{\int_{\Omega} \big(\bs n^{m+1}-\bs n^m\big) \cdot \big(\bs n^{m+1}-\bs n^m \big)dV+\varepsilon_0 }(\bs n^{m+1}-\bs n^m)+\frac{\delta \mathcal{F}}{\delta \bs n}[\bs n^{m+\frac{1}{2}}].
\end{equation}
with $\varepsilon_0$ a user-specified small value.


\subsubsection{The Oseen-Frank discrete gradient}
We recommend using a discrete gradient deduced from the specific form of the Oseen--Frank energy. As indicated by numerical tests in Section \ref{sect: ppt}, the proposed discrete gradient outperforms the previous two candidates in reducing the computational cost and enhancing the length preservation. 
To facilitate the derivation below, we write down the integration by parts involving the curl operator, 
\begin{equation}\label{eq: cross_intparts}
  \int_{\Omega} \bs{u}\cdot\nabla\times\bs{v}dV=\int_{\partial\Omega} \bs{u}\cdot\bs{\nu}\times\bs{v}dS +\int_{\Omega} \bs{v}\cdot\nabla\times\bs{u}dV.
\end{equation}
where we use $\bs \nu$ to represent the outward normal unit vector. 
To fix the idea, we assume periodic boundary conditions so that the last term in \eqref{eq: osfrank} does not appear and surface integrals vanish when doing integration by parts.
In this case, the variational derivative is given by
\begin{equation}\label{variation}
\begin{aligned}
\frac{\delta \mathcal{F}[\bs n]}{\delta \bs n}& =-k_{1}\nabla(\nabla \cdot \bs{n})+k_{2}\Big(   (\bs{n}\cdot \nabla \times \bs{n})(\nabla \times \bs{n})+   \nabla\times\big((\bs{n}\cdot \nabla \times \bs{ n})\bs{n}  \big)        \Big)\\
&+k_{3}\Big(  (\nabla \times \bs{n})\times (\bs{n}\times \nabla \times \bs{n})+   \nabla\times\big((\bs{n}\times \nabla \times \bs{ n})\times\bs{n}  \big) \Big).
\end{aligned}
\end{equation}

\begin{prop}
The Oseen-Frank discrete gradient takes the form
\begin{equation}\label{eq:OFDG}
\begin{aligned}
D^O_{\mathcal{F}}(\bs n)\Big|^{m+\frac{1}{2}}=&-k_1\nabla (\nabla \cdot \bs n^{m+1/2}) +{k_2}\Big\{ \beta^{m+1/2}  \nabla \times \bs n^{m+1/2} +\nabla \times (\beta^{m+1/2}  \bs n^{m+1/2}) \Big\}\\
&+{k_3}\Big\{   (\nabla \times \bs{n}^{m+1/2})\times \bs \omega^{m+1/2}+   \nabla\times\big(\bs \omega^{m+1/2}\times\bs{n}^{m+1/2}  \big)   \Big\},
\end{aligned}
\end{equation}
where $\bs \omega^{m+1/2}$ and $\beta^{m+1/2}$ are defined as
\begin{equation}\label{eq: omega}
\begin{split}
& \bs \omega^{m+1/2}=\big(\bs n^{m+1}\times \nabla \times   \bs n^{m+1}+\bs n^{m}\times \nabla \times \bs n^{m} \big)/2,\\
&  \beta^{m+1/2}=\big(\bs n^{m+1}\cdot \nabla \times \bs n^{m+1}+\bs n^{m}\cdot \nabla \times \bs n^{m} \big)/2.
\end{split}
\end{equation}
It is a second-order approximation of the Oseen-Frank energy variation \eqref{variation} and satisfies the discrete energy difference relation
\begin{equation}\label{eq: DDpropof}
\int_{\Omega} D^O_{\mathcal{F}}(\bs n)\Big|^{m+\frac{1}{2}}\cdot (\bs n^{m+1}-\bs n^m) dV=\mathcal{F}[\bs n^{m+1}]-\mathcal{F}[\bs n^{m}].
\end{equation}
\end{prop}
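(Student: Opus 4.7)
The statement contains two independent assertions: (i) $D^O_\mathcal{F}(\bs n)|^{m+1/2}$ is a second-order in $\tau$ approximation of $\delta\mathcal{F}/\delta\bs n$ at the midpoint, and (ii) the discrete energy identity \eqref{eq: DDpropof}. I would handle them separately, with (ii) being the substantive computation.

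For (i), a Taylor expansion argument suffices. Each ingredient in \eqref{eq:OFDG}, namely $\nabla\cdot \bs n^{m+1/2}$, $\nabla\times \bs n^{m+1/2}$, $\bs\omega^{m+1/2}$ and $\beta^{m+1/2}$, is either a linear midpoint value or a symmetrized mean at $t_m$ and $t_{m+1}$ of a quadratic expression in $\bs n$ and $\nabla\bs n$. Expanding about $t_{m+1/2}$ shows each equals its continuous counterpart up to $O(\tau^2)$. Substituting into \eqref{eq:OFDG} and comparing termwise with \eqref{variation} delivers the claim.

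For (ii), set $a=\bs n^m$, $b=\bs n^{m+1}$, $\delta=b-a$, $\mu=(a+b)/2=\bs n^{m+1/2}$ and split $\mathcal{F}[b]-\mathcal{F}[a]$ according to the three terms in \eqref{eq: osfrank}. The splay term is immediate: $(\nabla\cdot b)^2-(\nabla\cdot a)^2=2(\nabla\cdot\mu)(\nabla\cdot\delta)$, and one integration by parts converts the integrand to $-k_1\nabla(\nabla\cdot\mu)\cdot\delta$, matching the $k_1$ block of $D^O$. For the twist term, write $\phi(\bs n)=\bs n\cdot\nabla\times\bs n$, so $\phi(b)^2-\phi(a)^2=2\beta^{m+1/2}(\phi(b)-\phi(a))$ by the definition of $\beta^{m+1/2}$. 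The key algebraic identity, verified by direct expansion, is
\[
\phi(b)-\phi(a)=\mu\cdot\nabla\times\delta+\delta\cdot\nabla\times\mu,
\]
obtained by decomposing $b\cdot\nabla\times b-a\cdot\nabla\times a$ as $\tfrac12(b+a)\cdot\nabla\times(b-a)+\tfrac12(b-a)\cdot\nabla\times(b+a)$. A single application of \eqref{eq: cross_intparts} to the first summand (periodic boundary conditions kill the surface term) produces $\nabla\times(\beta^{m+1/2}\mu)$ and leaves the other as $\beta^{m+1/2}\nabla\times\mu$, matching the $k_2$ block of $D^O$.

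The bend term is the main obstacle, being quartic in $\bs n$ and living at the vector level. Setting $\bs\omega(\bs n)=\bs n\times\nabla\times\bs n$, I would use $|\bs\omega(b)|^2-|\bs\omega(a)|^2 = 2\bs\omega^{m+1/2}\cdot(\bs\omega(b)-\bs\omega(a))$ and verify the vectorial analogue
\[
\bs\omega(b)-\bs\omega(a)=\mu\times\nabla\times\delta+\delta\times\nabla\times\mu
\]
by the same bilinear trick $b\times B-a\times A=\tfrac12(b+a)\times(B-A)+\tfrac12(b-a)\times(B+A)$ with $B=\nabla\times b$, $A=\nabla\times a$. It then remains to move $\delta$ into the outermost slot in each piece via the cyclic scalar triple product identity in \eqref{eq: eleid}, and to apply \eqref{eq: cross_intparts} once to the piece containing $\nabla\times\delta$. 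The sign bookkeeping and the choice of which factor lands on $\delta$ is where care is required; once done, the two pieces collapse to $(\nabla\times\mu)\times\bs\omega^{m+1/2}$ and $\nabla\times(\bs\omega^{m+1/2}\times\mu)$, matching the $k_3$ block of $D^O$ and completing \eqref{eq: DDpropof}.
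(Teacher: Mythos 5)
Your proposal is correct and follows essentially the same route as the paper: the same splitting of $\mathcal{F}$ into splay/twist/bend parts, the same difference-of-squares plus symmetrized bilinear identities $\phi(b)-\phi(a)=\mu\cdot\nabla\times\delta+\delta\cdot\nabla\times\mu$ and its vector analogue, and a single application of \eqref{eq: cross_intparts} to the term carrying $\nabla\times\delta$; the cyclic-product and sign bookkeeping you flag does collapse to exactly the $k_3$ block as you state. The Taylor-expansion justification of second-order accuracy is slightly more explicit than the paper's appeal to the trapezoid rule, but is not a different argument.
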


\begin{proof}
Let us first construct the proposed discrete gradient and show that it satisfies the discrete energy relation \eqref{eq: DDprop}. Neglecting the boundary terms, one splits the Oseen-Frank energy functional \eqref{eq: osfrank} into three parts
\begin{equation}
\mathcal{F}[\bs n]=\frac{k_1}{2}\mathcal{F}_1[\bs n]+\frac{k_2}{2}\mathcal{F}_2[\bs n]  +\frac{k_3}{2}\mathcal{F}_3[\bs n]
\end{equation}
with
\begin{equation}
\begin{aligned}
\mathcal{F}_1[\bs n]=\int_{\Omega} (\nabla \cdot \bs{n})^{2}dV,\quad \mathcal{F}_2[\bs n]=(\bs{n}\cdot \nabla \times \bs{n})^{2}dV,\quad \mathcal{F}_3[\bs n]=(\bs{n}\times \nabla \times \bs{n})^{2}dV.
\end{aligned}
\end{equation}

For the first term, it is straightforward to derive that 
\begin{equation*}
\begin{aligned}
&\mathcal{F}_1[\bs n^{m+1}] -\mathcal{F}_1[\bs n^m]=\int (\nabla \cdot  \bs{n}^{m+1}+\nabla \cdot  \bs{n}^{m})(\nabla \cdot  \bs{n}^{m+1}-\nabla \cdot  \bs{n}^{m}) dV\\
=&2 \int \nabla \cdot ( \bs{n}^{m+1/2}) \nabla \cdot  (\bs{n}^{m+1}-\bs{n}^{m}) dV=\int -2\nabla ( \nabla \cdot \bs n^{m+1/2})\cdot (\bs n^{m+1}-\bs n^m) dV, 
\end{aligned}
\end{equation*}
where integration by parts is done in the last equality. Let us define
\begin{equation}\label{eq: df1}
D_{\mathcal{F}_{1}}(\bs n)\Big|^{m+\frac{1}{2}}=-2\nabla ( \nabla \cdot \bs n^{m+1/2})
\end{equation}
and one readily observes that 
\begin{equation}\label{eq: df1rela}
\int D_{\mathcal{F}_{1}}(\bs n)\Big|^{m+\frac{1}{2}}\cdot (\bs n^{m+1}-\bs n^m)  dV=\mathcal{F}_1[\bs n^{m+1}] -\mathcal{F}_1[\bs n^m].
\end{equation}

For the second term, one has the identity
\begin{equation*}
\begin{aligned}
& |\bs{n}^{m+1}\cdot \nabla \times \bs{n}^{m+1}|^{2}- |\bs{n}^{m}\cdot \nabla \times \bs{n}^{m}|^{2}\\
&=2\Big((\bs{n}^{m+1}-\bs{n}^{m})\cdot \nabla \times \bs{n}^{m+1/2} +\bs{n}^{m+1/2}\cdot \nabla\times (\bs{n}^{m+1}-\bs{n}^{m})            \Big)\beta^{m+1/2},
 \end{aligned}
\end{equation*}
where $\beta^{m+1/2}$ is defined in equation \eqref{eq: omega}. 
Thus, using \eqref{eq: cross_intparts}, we deduce that
\begin{equation*}
\begin{aligned}
\mathcal{F}_2[\bs n^{m+1}] -\mathcal{F}_2[\bs n^m]=2 \int \Big\{\beta^{m+1/2}  \nabla \times \bs n^{m+1/2} +\nabla \times (\beta^{m+1/2}  \bs n^{m+1/2}) \Big\} \cdot (\bs n^{m+1}-\bs n^m) dV.
\end{aligned}
\end{equation*}
Similarly, one can define the associated discrete gradient for the second term
\begin{equation}\label{eq: df2}
D_{\mathcal{F}_{2}}(\bs n)\Big|^{m+1/2}=2\Big\{ \beta^{m+1/2}  \nabla \times \bs n^{m+1/2} +\nabla \times (\beta^{m+1/2}  \bs n^{m+1/2}) \Big\}.
\end{equation}

For the third term, we begin with
\begin{align*}
  |\bs{n}^{m+1}\times\nabla\times\bs{n}^{m+1}|^2-|\bs{n}^m\times\nabla\times\bs{n}^m|^2
  =\big(\bs{n}^{m+1}\times\nabla\times\bs{n}^{m+1}-\bs{n}^m\times\nabla\times\bs{n}^m\big)\cdot 2\bs{\omega}^{m+1/2}. 
\end{align*}
We express
\begin{align*}
  \bs{n}^{m+1}\times\nabla\times\bs{n}^{m+1}-\bs{n}^m \times\nabla\times\bs{n}^m
  =(\bs{n}^{m+1}-\bs{n}^m)\times\nabla\times\bs{n}^{m+1/2}+\bs{n}^{m+1/2}\times\nabla\times(\bs{n}^{m+1}-\bs{n}^m). 
\end{align*}
Then we use \eqref{eq: eleid} and \eqref{eq: cross_intparts} to obtain 
\begin{align*}
  &\mathcal{F}_3[\bs n^{m+1}] -\mathcal{F}_3[\bs n^m]\nonumber\\
  =\,&2\int_{\Omega} \big((\nabla\times\bs{n}^{m+1/2})\times\bs{\omega}^{m+1/2}\big)\cdot(\bs{n}^{m+1}-\bs{n}^m)+(\bs{\omega}^{m+1/2}\times\bs{n}^{m+1/2})\cdot\nabla\times(\bs{n}^{m+1}-\bs{n}^m)dV\nonumber\\
  =\,&2\int_{\Omega} \big((\nabla\times\bs{n}^{m+1/2})\times\bs{\omega}^{m+1/2}\big)\cdot(\bs{n}^{m+1}-\bs{n}^m)+\nabla\times(\bs{\omega}^{m+1/2}\times\bs{n}^{m+1/2})\cdot(\bs{n}^{m+1}-\bs{n}^m)dV.\label{eq: df3rela}
\end{align*}
Thus, we can define
\begin{equation}\label{eq: df3}
D_{\mathcal{F}_{3}}(\bs n)\Big|^{m+1/2}=2 \Big\{ (\nabla \times \bs{n}^{m+1/2})\times \bs \omega^{m+1/2}+   \nabla\times\big(\bs \omega^{m+1/2}\times\bs{n}^{m+1/2}  \big)  \Big\}.
\end{equation}

Combing equations \eqref{eq: df1}, \eqref{eq: df2} and \eqref{eq: df3}, one arrives at the proposed Oseen-Frank discrete gradient \eqref{eq:OFDG}-\eqref{eq: omega} and the discrete energy relation \eqref{eq: DDprop}.

Notice that the form of the discrete gradient resembles that of the energy variation \eqref{variation} at time step $m+1/2$. 
In particular, the terms $\bs{n}\cdot \nabla \times \bs{n}$ and $\bs{n}\times \nabla \times \bs{n}$ are replaced by $\beta^{m+1/2}$ and $\bs \omega^{m+1/2}$ in equation \eqref{eq: omega}, respectively, which follows the trapezoid formula. 
%
%
%
%
%
%
Thus, it is straightforward to see that the proposed Oseen-Frank discrete gradient is of second-order accuracy of the continuous energy variation \eqref{variation}.
\end{proof}

\begin{rem}
  The derivation also applies for Dirichlet boundary conditions or natural boundary conditions (see \eqref{ntrbnd}), which we explain briefly in Appendix. 
  \end{rem}

\begin{rem}
  We only present time discretization. For spatial discretization, we use the Fourier spectral method since we adopt periodic boundary conditions.
  For boundary conditions of other types, it has no essential difficulty to incorporate spatial discretizations of Galerkin type with Lagrange basis at discrete points.
  In this case, the preservation of length constraint is guaranteed at these discrete points. 
\end{rem}

\subsection{Time adaptive solution algorithm} \label{sect: adap}
The proposed Rdg scheme \eqref{Rdgscheme} is nonlinear, we can solve the resultant nonlinear system efficiently by inexact Newton-Krylov (INK) method combined with Armijo line search technique (see e.g. \cite{kelley1995iterative}).  It is worthwhile to point out that though the Rdg scheme is unconditionally energy stable, a large time step size may slow down the INK solution algorithm.
Moreover, numerical experiments in the next section demonstrate that the vector field often exhibits multi-stage evolution phenomenon. It may evolve quickly in transient stage and remain almost unchanged for the rest of the simulation time. Thus, it is highly desirable to adjust the time step sizes adaptively for efficient and accurate simulations. Inspired by \cite{qiao2011adaptive}, we propose the following time-adaptivity strategy
\begin{equation}\label{eq: adap}
\tau_{m+1}={\rm max}\bigg(\tau_{\rm min},  \frac{\tau_{\rm max}}{ \sqrt{1+\alpha| (\mathcal{F}[\bs n^{m}]-\mathcal{F}[\bs n^{m-1}])/\tau_{m}        |^2 }}   \bigg),
\end{equation} 
where $\tau_{m}$ is the $m$-th time step, $\alpha$ is a constant chosen to adjust the speed of change of the step size, and $\tau_{\rm max}$ and $\tau_{\rm min}$ are the upper and lower bounds of the time step sizes, respectively.

\section{Numerical experiments}\label{sect: num}
In this section, we conduct several numerical experiments to demonstrate the accuracy, efficiency and robustness of the proposed rotational discrete gradient method for the Oseen-Frank gradient flows, with particular emphasis on its properties of energy stability and exact length preservation of the vector field.
A manufactured solution is firstly employed to show the spatial and temporal convergence rates of this method with the mean-value discrete gradient \eqref{eq:mv} (dubbed as "MvRdg" method), the Gonzalez's discrete gradient \eqref{eq: Gondg} (dubbed as "GonRdg" method) and the proposed Oseen-Frank discrete gradient \eqref{eq:OFDG} (dubbed as "OFRdg" method).
In MvRdg, we set the number of the Gauss quadrature points $N_g=4$ in order to acquire adequate accuracy. 
The performance of the rotational discrete gradient method with these three different discrete gradients are compared  to show the superiority of the proposed OFRdg method on accuracy and robustness.
The efficiency of the adaptive time stepping strategy is also demonstrated using simulations with various initial conditions and elastic coefficients $(k_1,k_2,k_3)$.
Moreover, we investigate the influence of the elastic constants on the dynamics of vector field, where multi-stage evolution phenomena of the vector field can be observed, especially when the magnitudes of the elastic coefficients are highly disparate. 

\subsection{Convergence test}
We first use a manufactured solution to the Oseen-Frank gradient flow system \eqref{rotform}   to numerically demonstrate the rates of convergence in space and time of the Rdg scheme proposed in Section \ref{sect: rdg}. Let us consider the following analytic solution $\bs{n}=(n_1,n_2,n_3)^{\intercal}$ on $\Omega=[0,2\pi]^3$, where
\begin{equation}\label{ana}
\begin{aligned}
&n_1(\bs x,t)=\sin\big(\sin(x_1+t)\cos(x_2)\sin(x_3)\big)\cos\big(\cos(x_1)\sin(x_2+t)\cos(x_3)\big),\\
&n_2(\bs x,t)=\sin\big(\sin(x_1+t)\cos(x_2)\sin(x_3)\big)\sin\big(\cos(x_1)\sin(x_2+t)\cos(x_3)\big),\\
&n_3(\bs x,t)=\cos\big(\sin(x_1+t)\cos(x_2)\sin(x_3)\big),
\end{aligned}
\end{equation}
 such that $\bs{n}$ is periodic and $|\bs{n}|=1$. A body force term $\bs f(\bs x,t)$ is chosen such that the analytic solution $\bs n$ satisfies 
 \begin{equation}\label{eq: uf}
\bs n_t=-\bs n\times \Big(   \frac{\delta \mathcal{F}[\bs n]}{\delta \bs n}\times \bs n   \Big)+\bs f.
 \end{equation} 
 The proposed MvRdg, GonRdg and OFRdg methods are employed to numerically integrate system \eqref{eq: uf} from $t=0$ to $t=t_f$. The numerical solutions are then compared with the contrived solution at $t=t_f$ and the numerical errors in $L^{\infty}$ norm are measured for  $\bs n=(n_1,n_2,n_3)^{\intercal}.$ In order to demonstrate the flexibility of the Rdg method for dealing with anisotropic elasticity, let us choose the elastic coefficients to be $(k_1,k_2,k_3)=(2,3,4)$. We set $t_0=0$, $t_f=0.2$ and adopt a uniform number of Fourier collocation points in each dimension $N_1=N_2=N_3=N$. The stopping tolerance for the INK solver is set to $10^{-8}$, if otherwise specified.  
 
In the spatial convergence test, we  fix the time step size $\tau=10^{-4}$ and vary $N$ systematically between 6 and 30. Figure \ref{figs: spatialtest} shows the $L^{\infty}$-errors of $u_1,u_2,u_3$ versus $N$. It can be observed that when $N$ is below 26, the errors computed using the Rdg method with these three discrete gradients decrease exponentially with increased $N$, displaying an
exponential convergence rate in space.  And when $N>26$, the error curves remain parallel, showing a saturation due to the temporal truncation error. One can also distinguish a slightly better resolution of the OFRdg method against that of the other two methods.  

\begin{figure}[htbp]
\begin{center}
  \subfigure[$L^{\infty}$-error of $n_1$]{ \includegraphics[scale=.22]{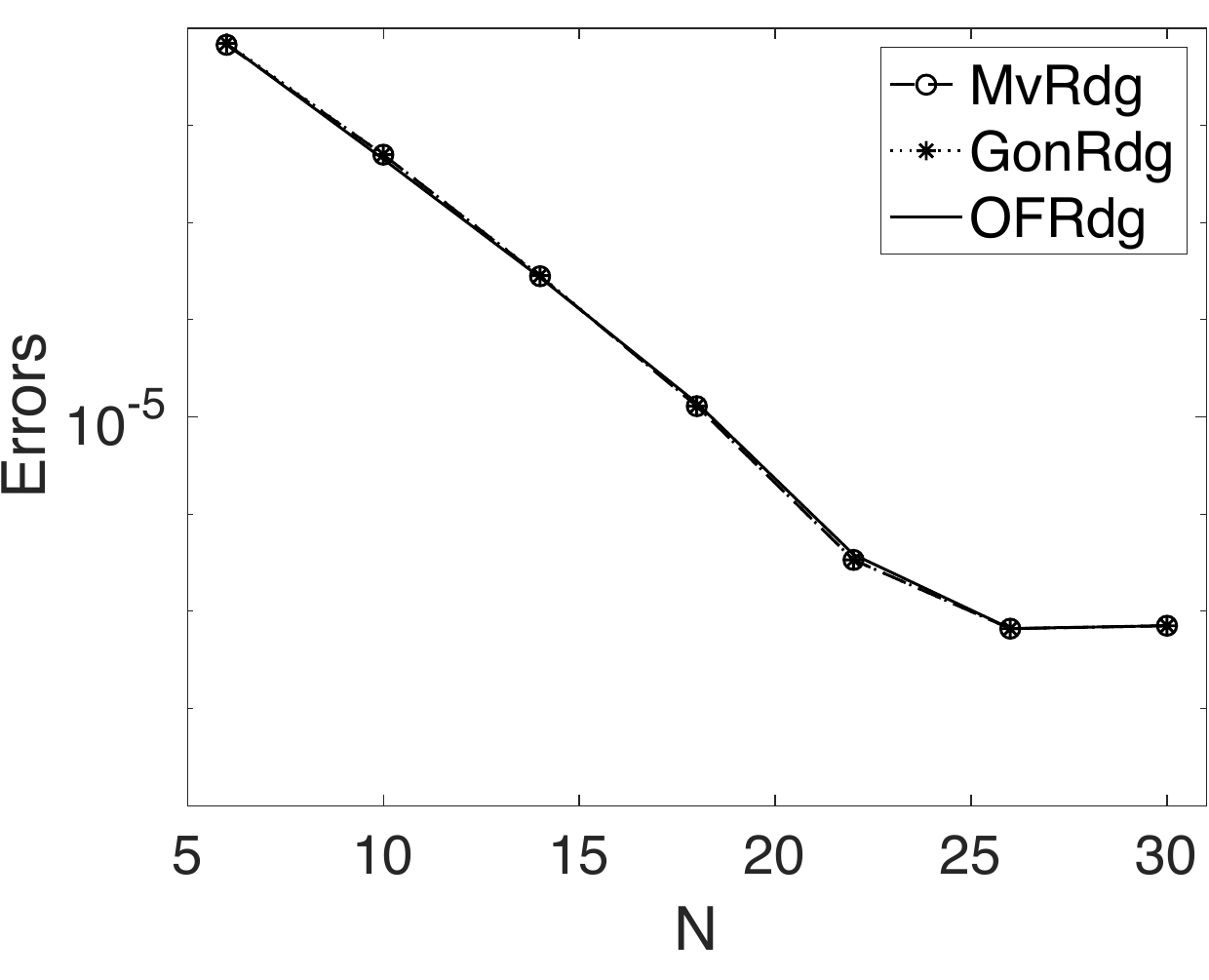}}
  \subfigure[$L^{\infty}$-error of $n_2$ ]{ \includegraphics[scale=.22]{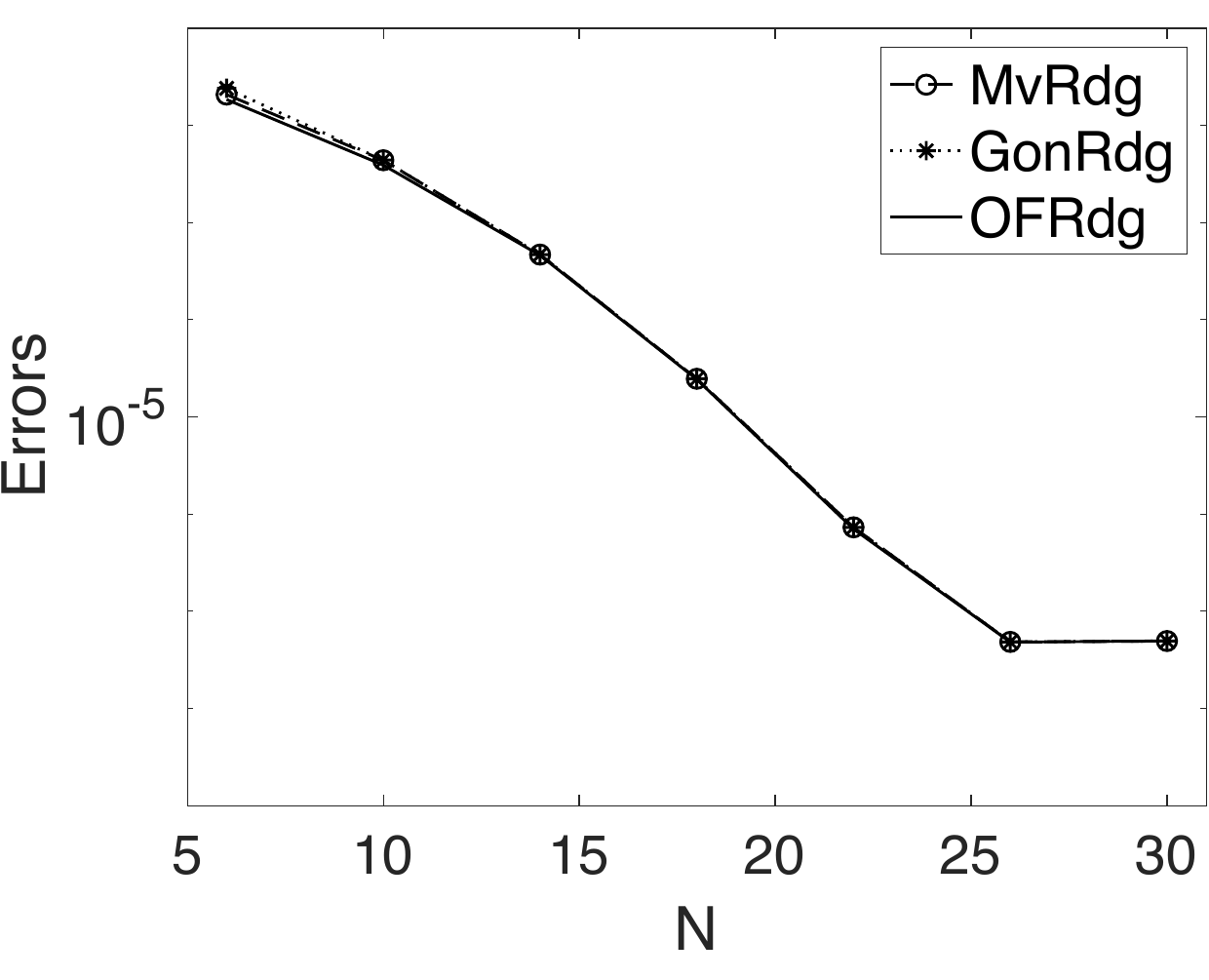}}
 \subfigure[ $L^{\infty}$-error of $n_3$]{ \includegraphics[scale=.22]{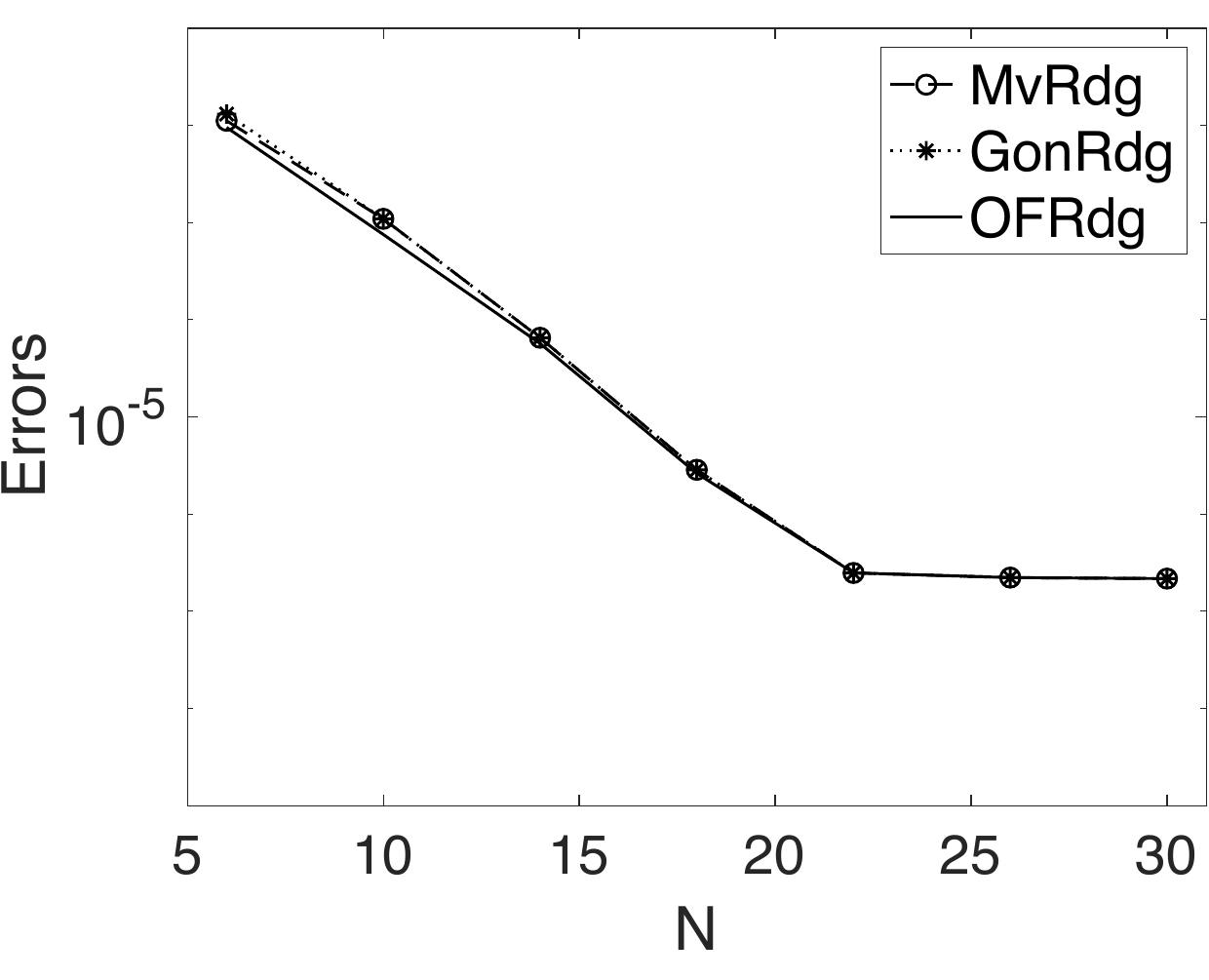}} 
    \caption{\small Spatial convergence test: $L^{\infty}$-errors of the MvRdg, GonRdg and OFRdg methods as a function of the Fourier collocation points $N_1=N_2=N_3=N$ for $n_1$ (a), $n_2$ (b) and $n_3$ (c).} 
   \label{figs: spatialtest}
\end{center}
\end{figure}

 \begin{figure}[htbp]
\begin{center}
  \subfigure[$L^{\infty}$-error of $n_1$]{ \includegraphics[scale=.22]{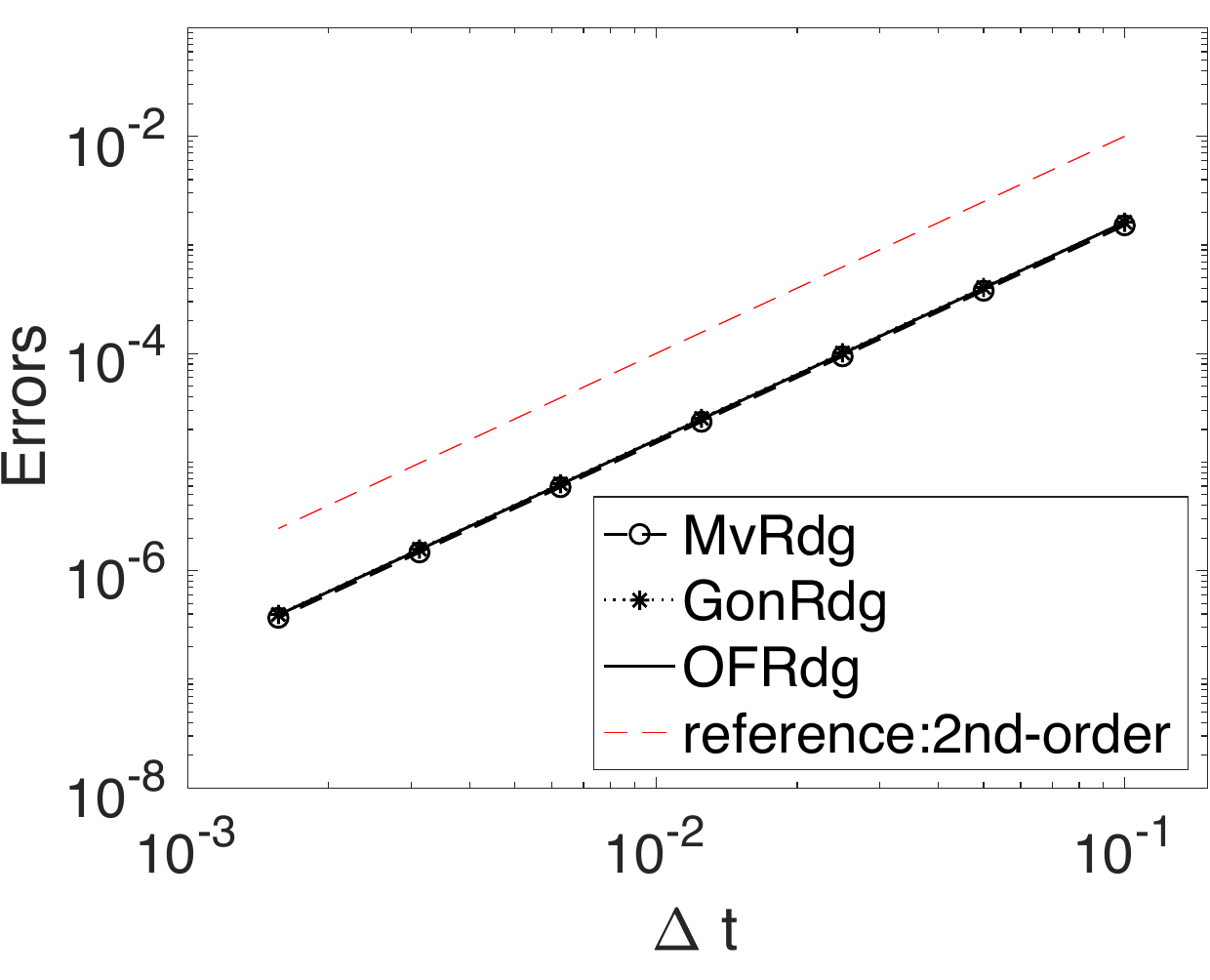}}
  \subfigure[$L^{\infty}$-error of $n_2$ ]{ \includegraphics[scale=.22]{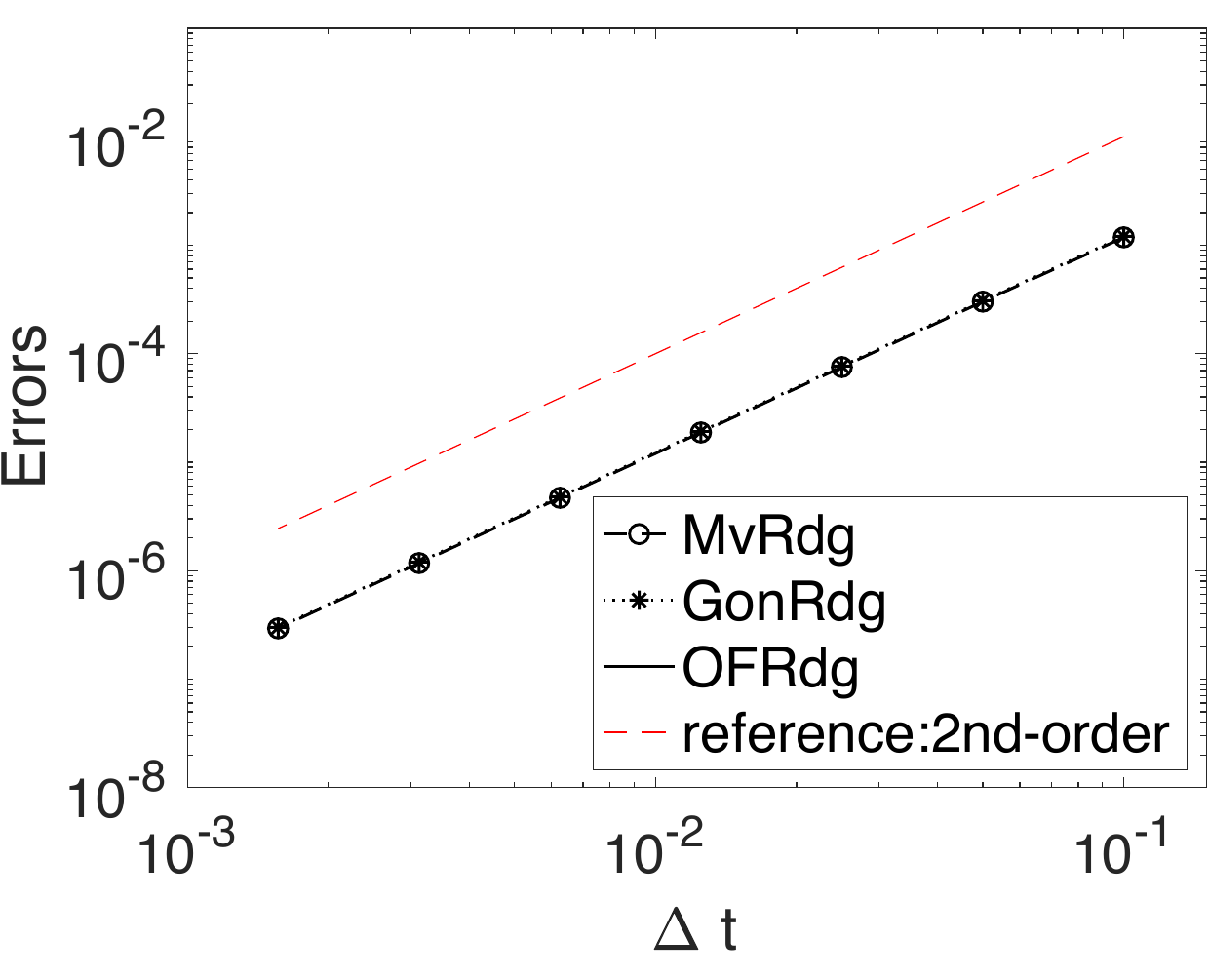}}
 \subfigure[ $L^{\infty}$-error of $n_3$]{ \includegraphics[scale=.22]{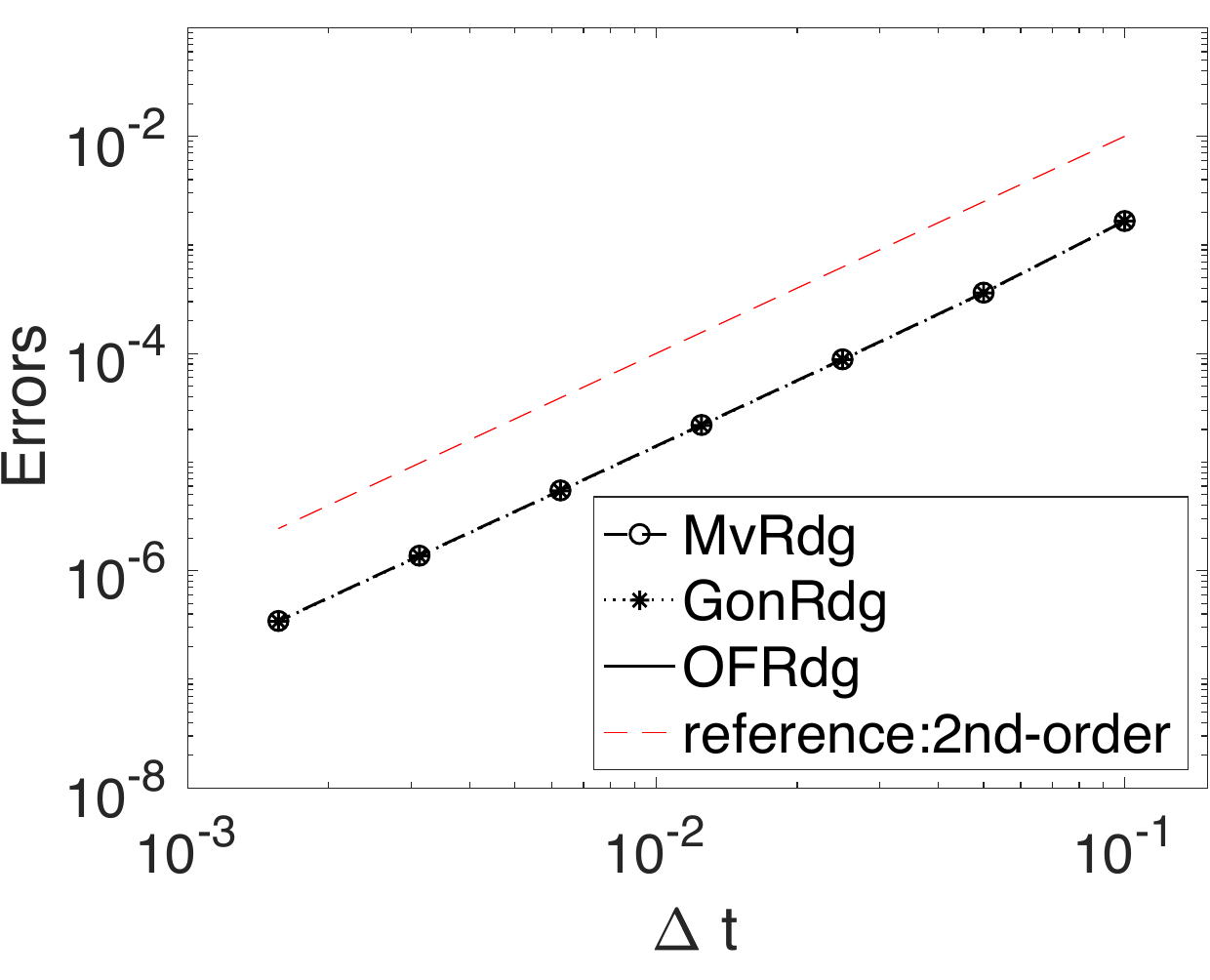}}
    \caption{\small Temporal convergence test: $L^{\infty}$-errors of the MvRdg, GonRdg and OFRdg methods as a function of the time step size $\tau$ for $n_1$ (a), $n_2$ (b) and $n_3$ (c). } 
   \label{figs: temporaltest}
\end{center}
\end{figure}

In the temporal convergence test, we fix $N$ at a large value 40 and vary the time step size $\tau$ systematically between $0.01$ and $0.0016$. Then for every $\tau$, one gets the $L^{\infty}$-error between the numerical solution and the manufactured solution computed at $t_f=0.2$. Figure \ref{figs: temporaltest} shows the numerical errors as functions of $\tau$.  A second-order convergence rate in time is observed for the Rdg method and the differences of the numerical errors obtained with the three discrete gradients are negligible.

\subsection{Property-preserving test}\label{sect: ppt}

In what follows, we conduct several numerical experiments to demonstrate that the proposed Rdg method is energy stable and length preserving. We assume that the vector field varies in the $x_1$-$x_2$ plane and is uniform along the $x_3$-axis, and prescribe the computational domain to be $\Omega=[-1,1]^{2}$. Correspondingly, we use 40 Fourier collocation points evenly in the $x_1$ and $x_2$ directions. 

We first consider the evolution of the nematic liquid crystal with the following initial distribution of the vector field
\begin{equation}\label{eq:utest1}
\bs{n}^{0}=\Big( \sin\big(2\sin(\pi x_1)\big) \cos(\pi x_2)   ,    \sin\big( 2\sin(\pi x_1) \big) \sin(\pi x_2)          ,    \cos\big( 2\sin(\pi x_1)  \big)  \Big)^{\intercal},
\end{equation}  
such that $\bs n^0$ satisfies the periodic boundary condition and is of unit length $|\bs{n}^{0}|=1$. We first set the elastic coefficients to be $k_1=k_2=k_3=1$. The Rdg method with three different discrete gradients is employed to integrate equation \eqref{rotform} from $t=0$ to $t=10$ using time step size $\tau=10^{-3}$.

\begin{figure}[htbp]
 \begin{center}  
 \subfigure[Initial profile]{  \includegraphics[scale=.42] {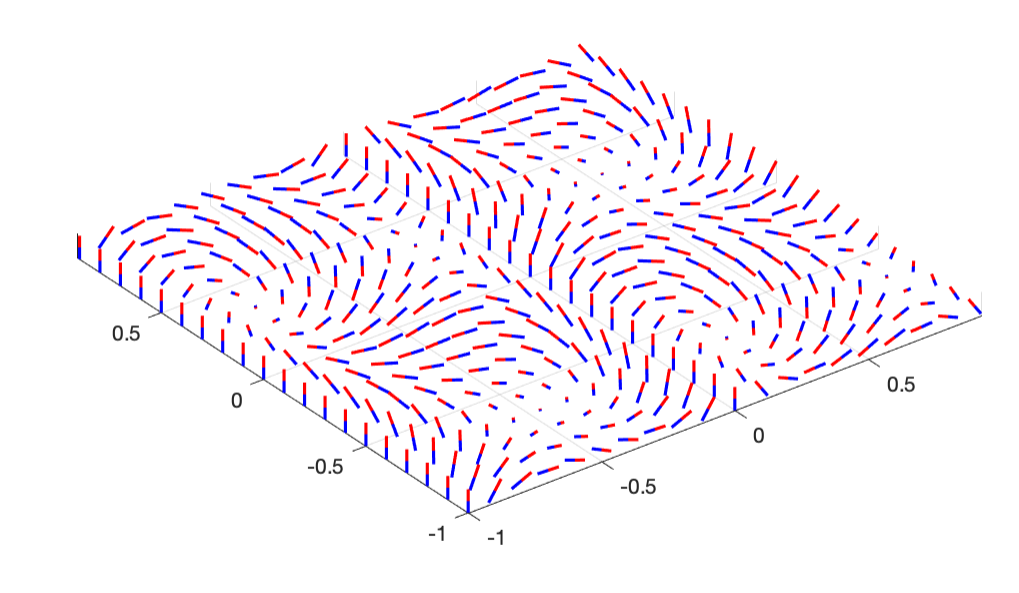}}  
 \subfigure[Energy vs Time]{  \includegraphics[scale=.28]{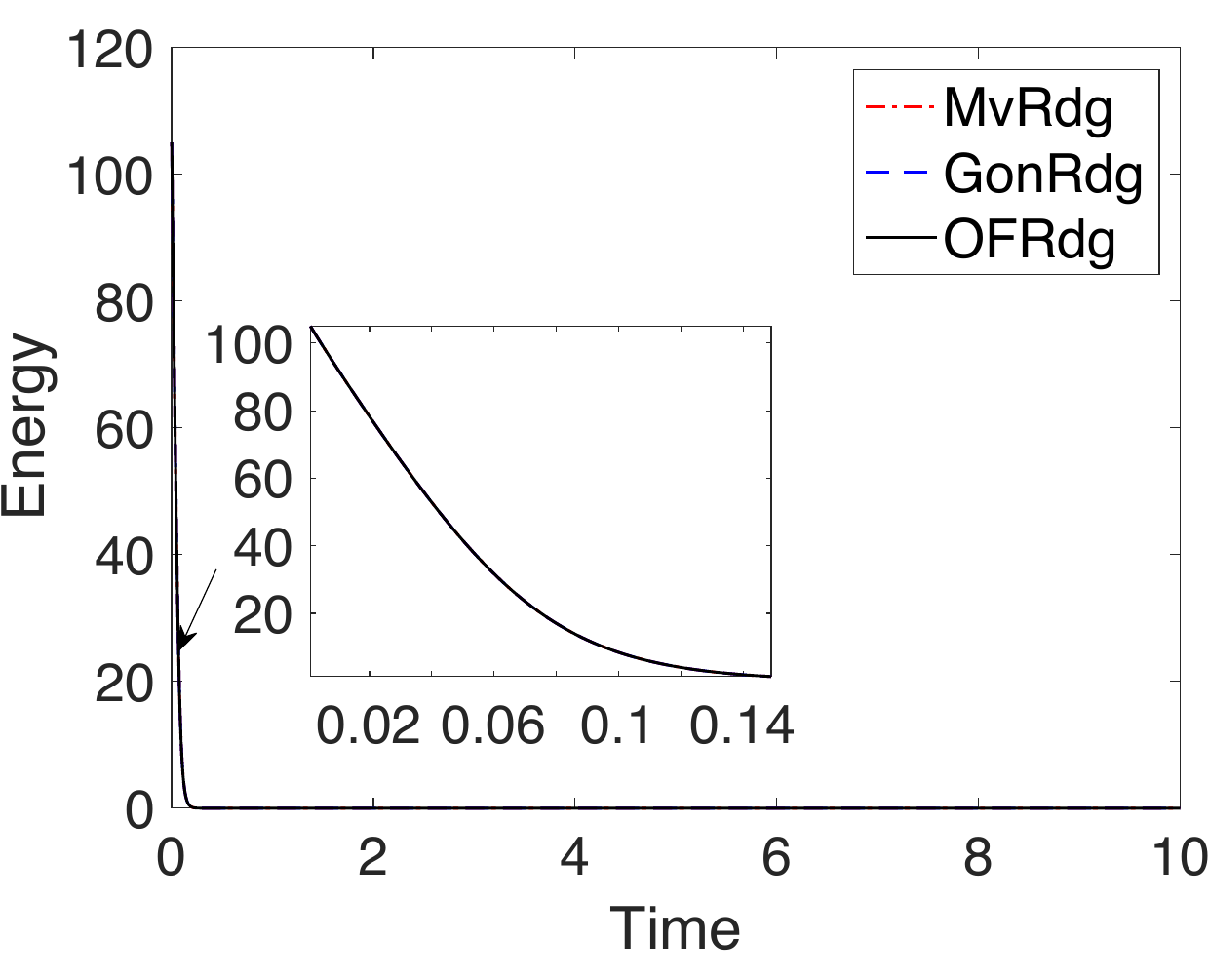}} \\
 \subfigure[Length error vs Time]{  \includegraphics[scale=.28]{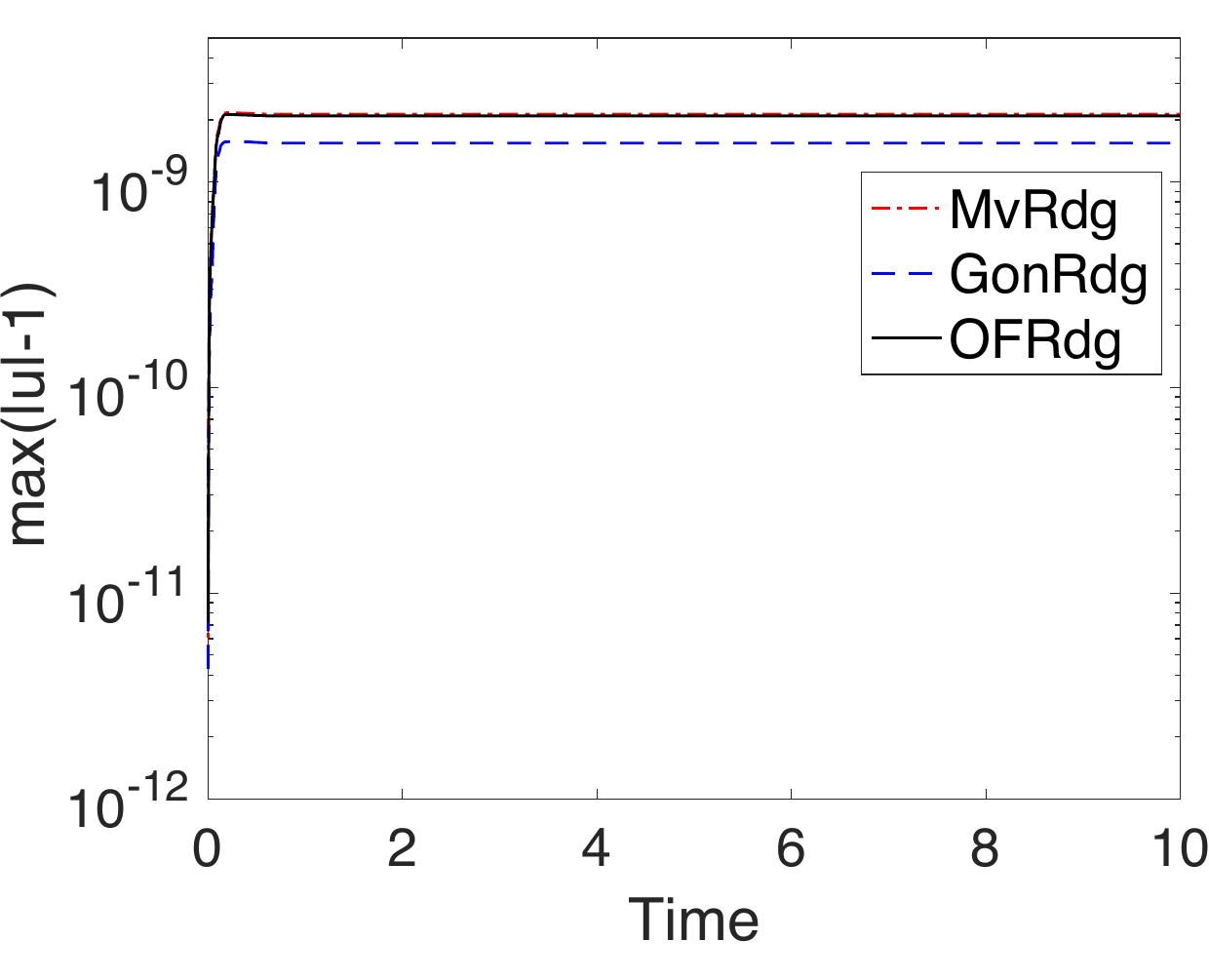}
 }\quad \quad \;
 \subfigure[Computational cost vs Time]{  \includegraphics[scale=.28]{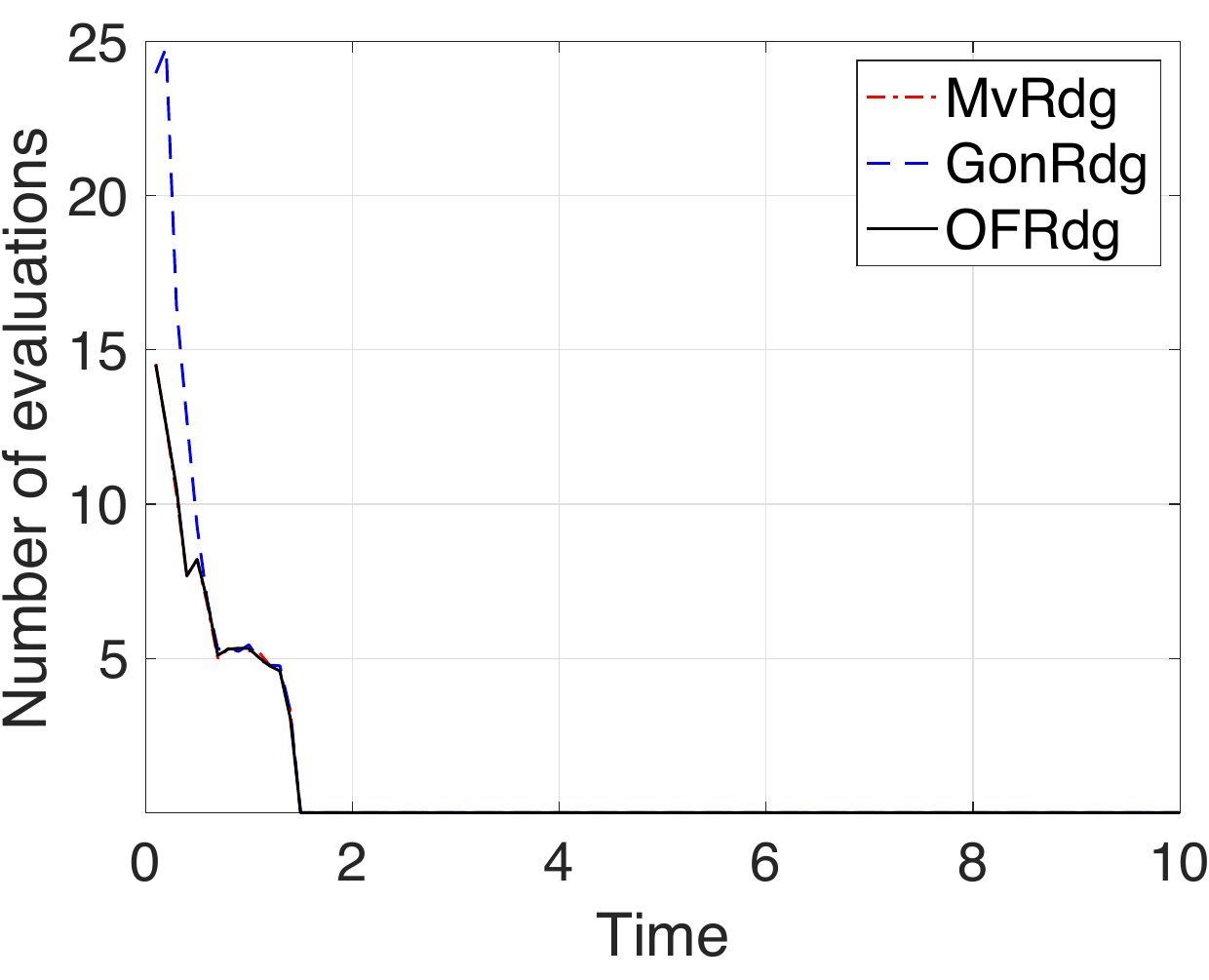}
 } 
 \caption{\small Evolution of the vector field with initial field distribution \eqref{eq:utest1}. (a) Initial profile of the vector field; (b) time history of energy; (c) time history of the length error; (d) time history of the number of function evaluations at each step.}  \label{fig: homtest1}
 \end{center} 
 \end{figure}

 \begin{figure}[htbp]
 \begin{center}  
  \subfigure[Final profile via MvRdg]{  \includegraphics[scale=.28]{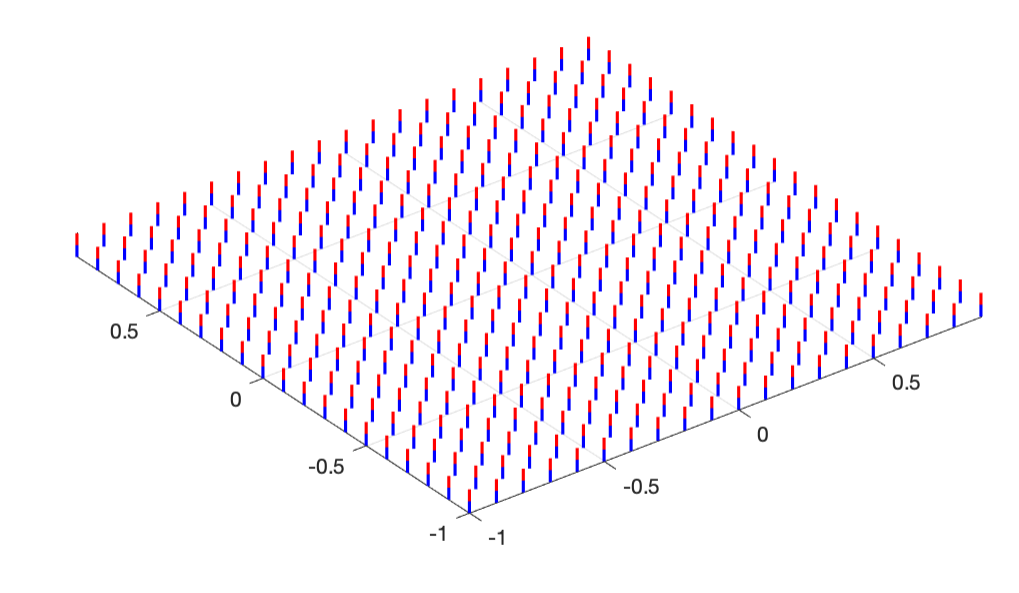}}
 \subfigure[Final profile via  GonRdg]{  \includegraphics[scale=.28]{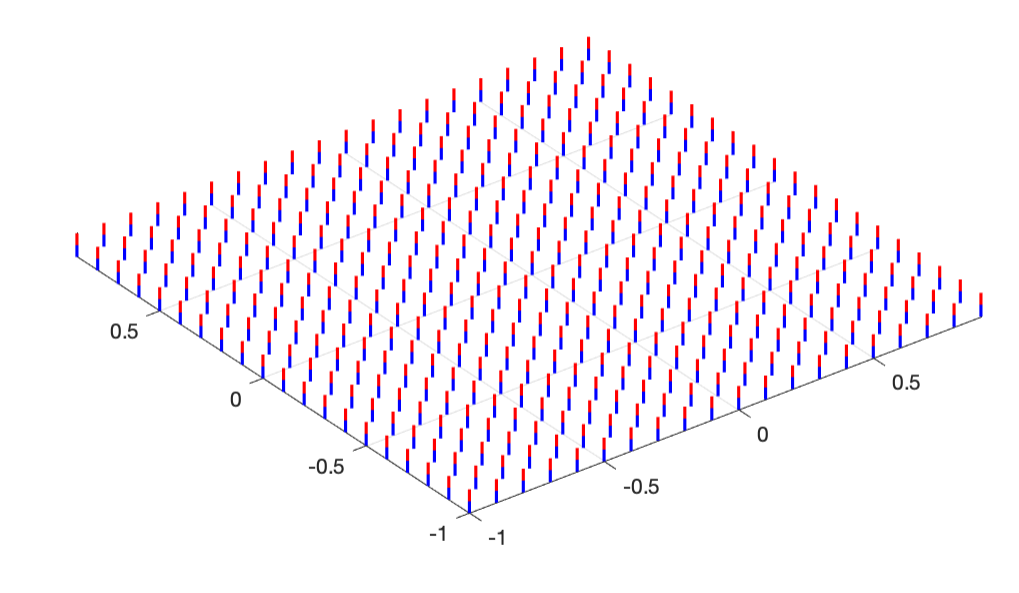}} 
  \subfigure[Final profile via OFRdg]{  \includegraphics[scale=.28]{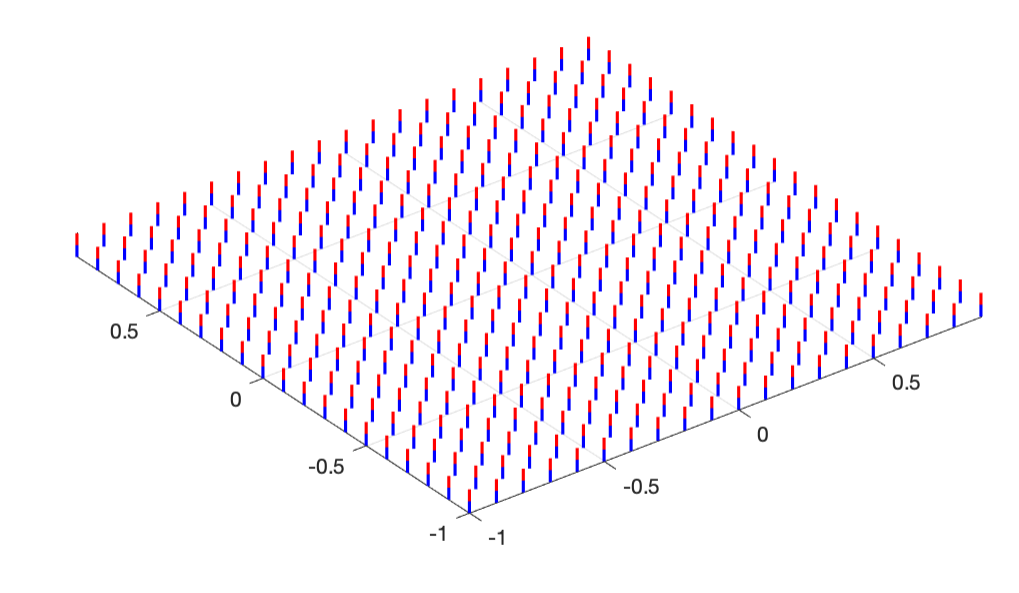}} 
 \caption{ \small The equilibrium state of the vector field with initial field distribution \eqref{eq:utest1} computed using (a) the MvRdg method; (b) GonRdg method; (c) OFRdg method.}   \label{fig: homtest1finalprofile}
 \end{center}
 \end{figure}

Figure \ref{fig: homtest1} (a) depicts a side view (azimuth $=-37.5^{\circ}$, elevation $=30^{\circ}$) of the initial molecule arrangement using colored rods with the red side represents the direction of the vector field while the blue side represents the opposite direction. Figure \ref{fig: homtest1} (b) shows the time history of the energy $\mathcal{F}[\bs n]$, computed using the MvRdg, GonRdg and OFRdg methods. It can be observed that the energy decays drastically from $t=0$ to $t=0.15$ and remains 0 afterwards. One could not discern any difference of the energy curves obtained with these three methods, even in the zoom-in subfigure. Figure \ref{fig: homtest1} (c) depicts the time history of the length error under $L^{\infty}$-norm. It can be seen that the length error curves level off at around $10^{-9}$, showing that the unit-length is preserved, up to the tolerance of the inexact Newton-Krylov (INK) solver for solving the discrete nonlinear system \eqref{Rdgscheme}.  Figure \ref{fig: homtest1} (c) plots the time history of the number of function evaluations performed in the INK solver at each time step.  Compared with the MvRdg and the OFRdg methods, one can observe that there is a remarkable increase in the number of function calls of the GonRdg method,  implying a relatively slow of convergence of the INK solver. We also record the average time consumed per time step for these three methods, which are 0.0359s, 0.0316s and 0.0158s for the MvRdg, GonRdg and the OFRdg methods, respectively. It can be seen clearly that the OFRdg method has around $2\times$ speed up compared with that of the other two methods.   In Figure \eqref{fig: homtest1finalprofile}, we plot the associated equilibrium state of the vector field computed using these three methods.  It can be seen that the vector fields all turn to homogeneous distributions and negligible difference can be observed, demonstrating an agreement in the numerical results obtained with these three methods.  

Next, we alter the initial distribution of the vector field to
\begin{equation}\label{eq:utest2}
\bs{n}^{0}=\Big(\sin\big(\pi x_1+2\cos(\pi x_2) \big),\,0\,,\cos \big(\pi x_1+2\cos(\pi x_2) \big) \Big)^{\intercal}.
\end{equation}  
In this example, the evolution of the vector field is more complex and correspondingly, the numerical simulations are more challenging. 

\begin{figure}[htbp]
 \begin{center}  
 \subfigure[Initial profile]{  \includegraphics[scale=.42]{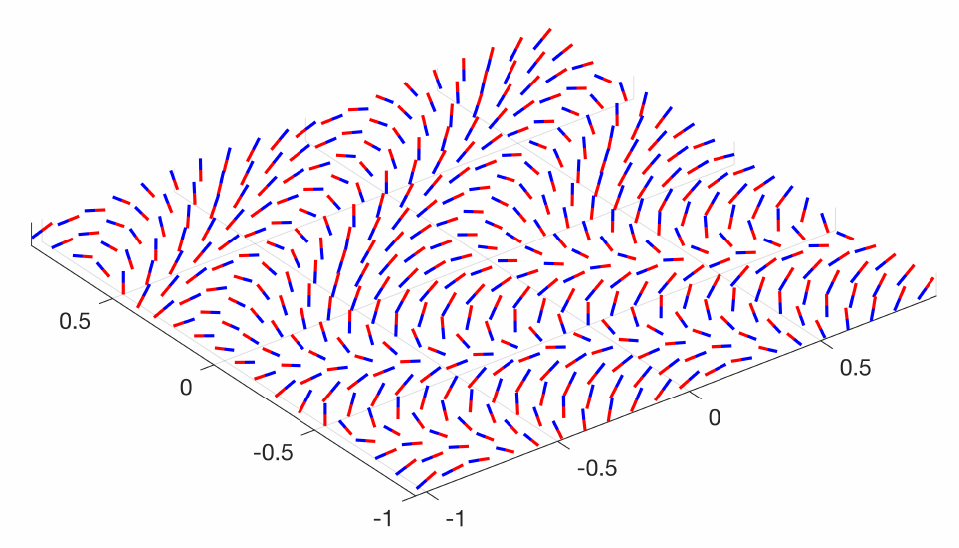}} 
 \subfigure[Energy vs Time]{  \includegraphics[scale=.28]{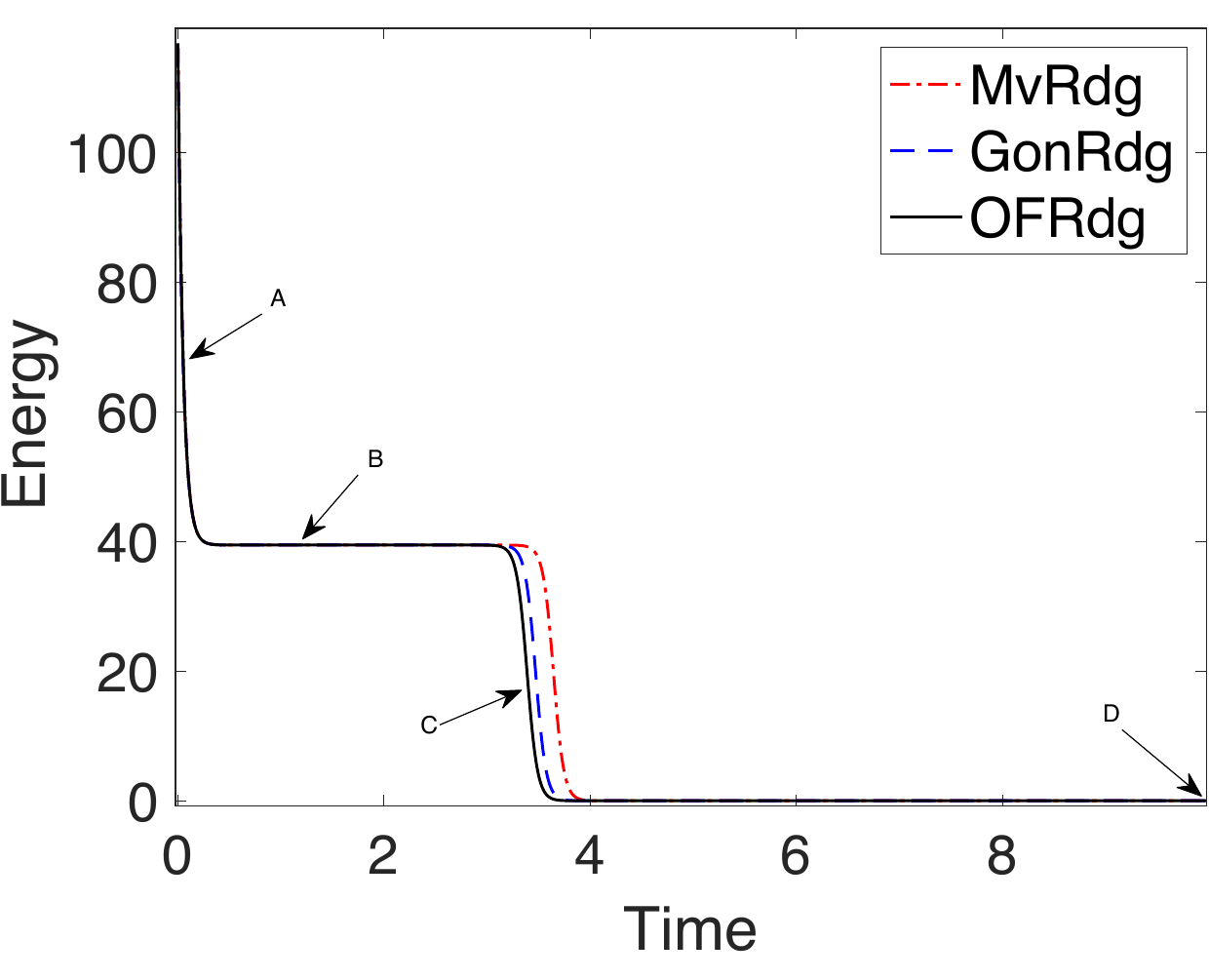}} 
 \subfigure[Length error vs Time]{  \includegraphics[scale=.28]{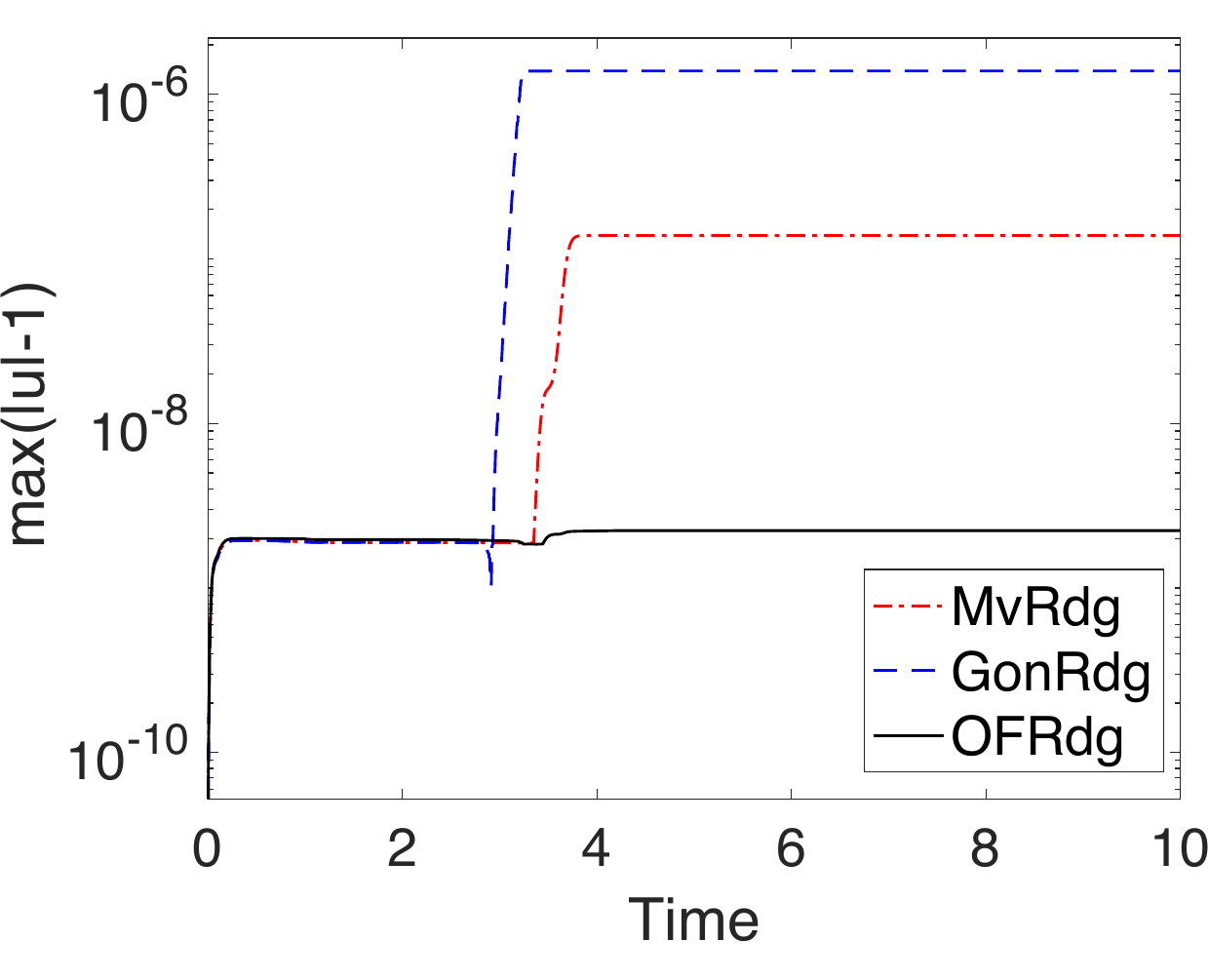} } 
 \subfigure[Computational cost vs Time]{  \includegraphics[scale=.28]{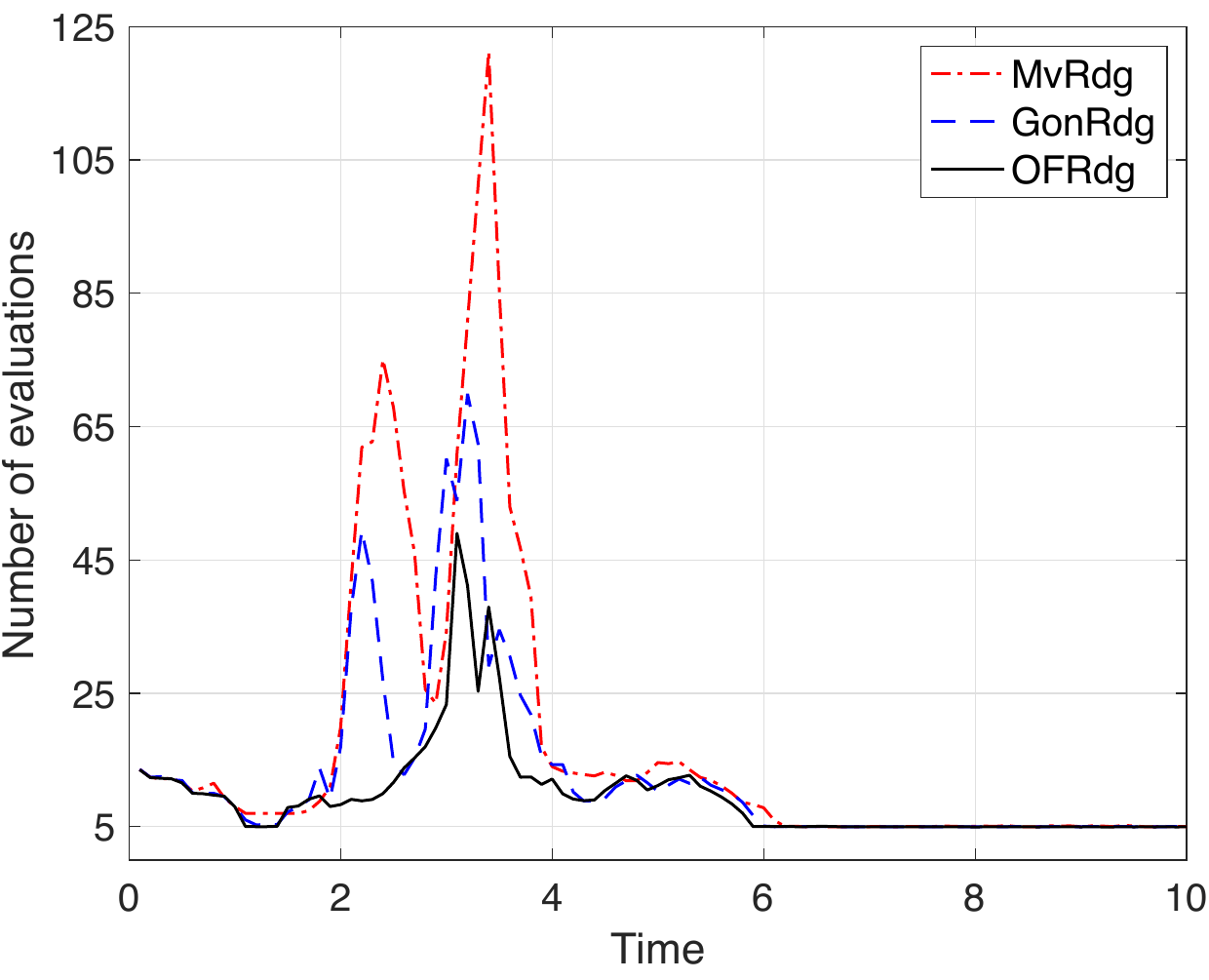} } 
 \caption{ \small Evolution of the vector field with initial field distribution \eqref{eq:utest2}. (a) Initial profile of the vector field; (b) time history of energy; (c) time history of the length error; (d) time history of the number of function evaluations at each time step.  }   
 \label{fig: homtest2}
 \end{center}
 \end{figure}
 
We depict the side view of the initial profile of the vector field, the time histories of the energy, length error in $L^{\infty}$-norm and the number of function evaluations in each time step in  Figure \ref{fig: homtest2} (a)-(d), respectively, obtained with the MvRdg, GonRdg and OFRdg methods. We observe that: 
\begin{itemize}
\item[(i)] The discrete energy curves all decay without spurious oscillations, thanks to the energy-stability nature of the proposed Rdg method. The departure time from stage B is slightly different (see Figure \ref{fig: homtest2}) due to the saddle-point nature of stage B. As we will see later, saddle-point stages are frequently encountered. 
\item[(ii)] There is a sudden jump of the length error to around $10^{-6}$ of the MvRdg and GonRdg methods, demonstrating a loss of accuracy in preserving the unit-length of the vector field, due to the convergence issue of the INK solver for these two methods. On the contrary, the length error curve obtained with the OFRdg method levels off at $10^{-9}$, which is more accurate than the previous two methods. 
\item[(iii)] Compared with the MvRdg and GonRdg methods, it takes the smallest number of function evaluations for the OFRdg method to converge in each time step.  We record the average time-consumed per time step and find that it takes 0.3194s, 0.1793s and 0.0788s for the MvRdg, GonRdg and the OFRdg methods to converge for each step on average. There is a $4\times$ speedup of the OFRdg method over the MvRdg method and a $2.2\times$ speedup over the GonRdg method.  
\end{itemize}
The above numerical experiments indicate that the OFRdg method has significant advantages over the MvRdg and GonRdg methods in terms of accuracy, efficiency and robustness.

\subsection{Dynamics against anisotropic elasticity}

 \begin{figure}[htbp]
 \begin{center}  
 \subfigure[$t=0.1$]{  \includegraphics[scale=.4]{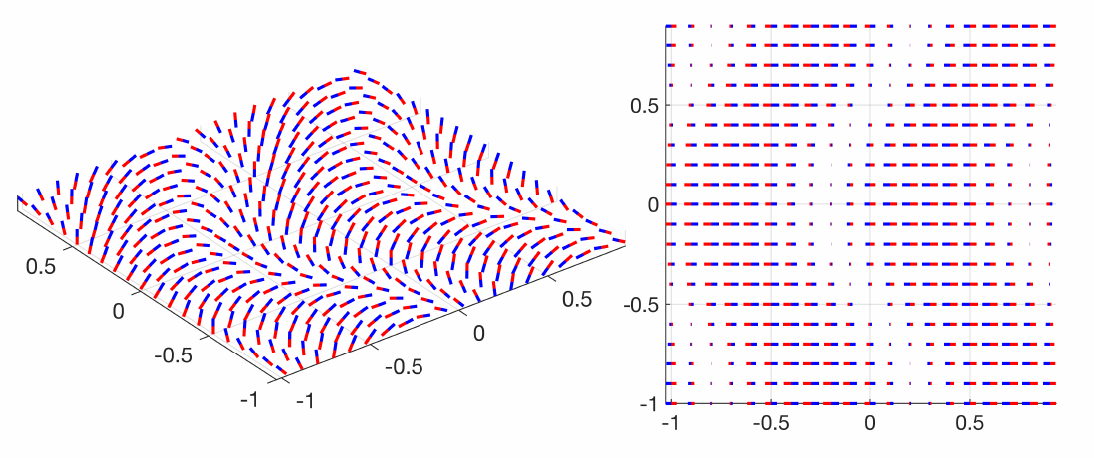}} 
 \subfigure[$t=1.5$]{  \includegraphics[scale=.4]{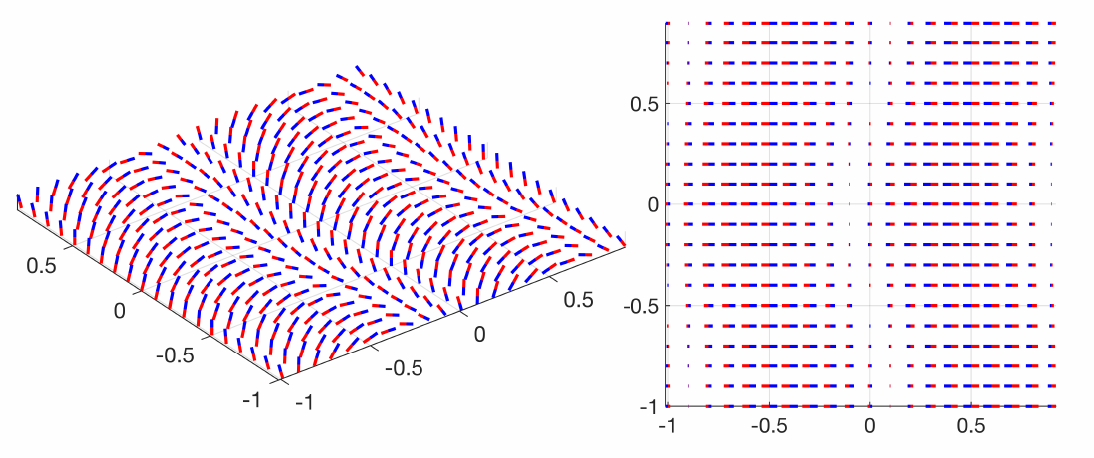} } 
  \subfigure[$t=3.5$]{  \includegraphics[scale=.4]{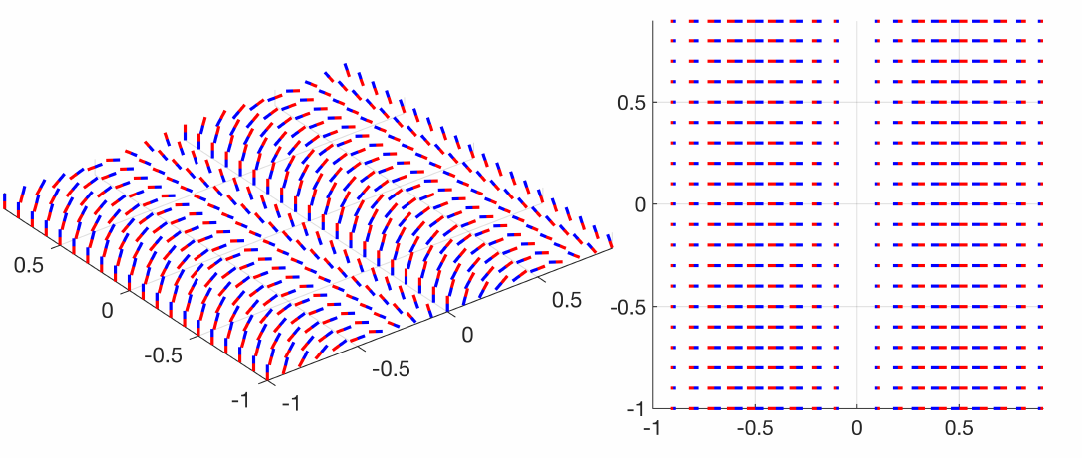} } 
   \subfigure[$t=10$]{  \includegraphics[scale=.4]{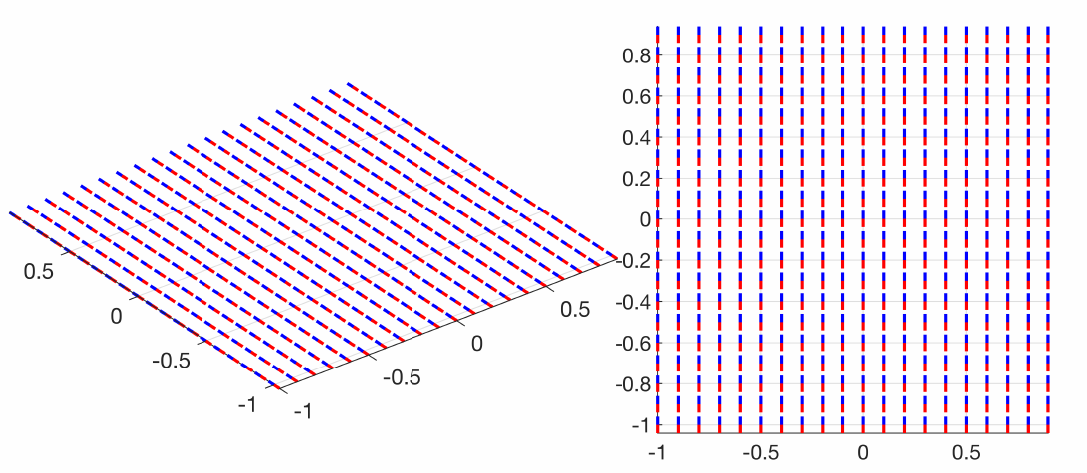} } 
 \caption{\small Time histories of snapshots of the vector field with initial field distribution \eqref{eq:utest2} and the moduli coefficients $(k_1,k_2,k_3)=(1,1,1)$ at (a) $t=0.1$; (b) $t=1.5$; (c) $t=3.5$; (d) $t=10$.}  
  \label{fig: snapshotcase1k1}
 \end{center}
 \end{figure}

\begin{figure}[htbp]
 \begin{center}  
 \subfigure[$t=10$]{  \includegraphics[scale=.4]{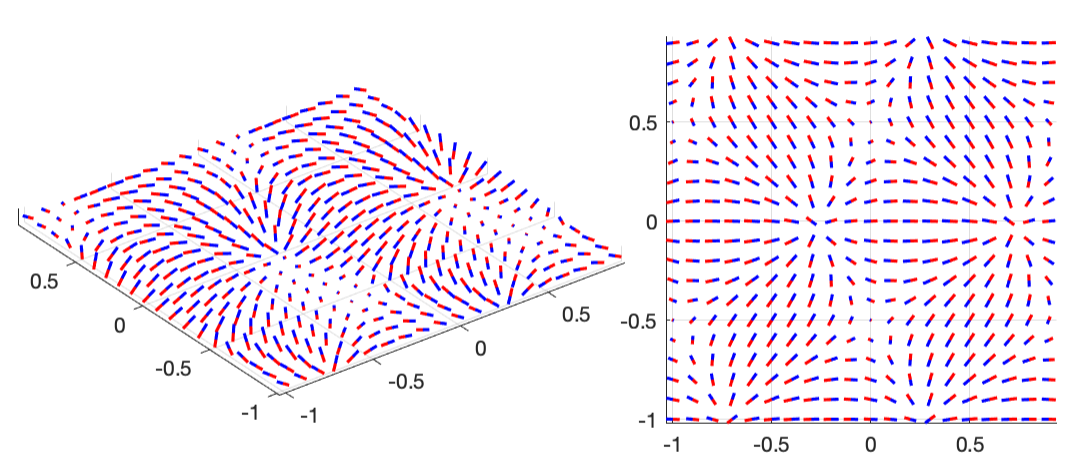}} 
   \subfigure[$t=29.7$]{  \includegraphics[scale=.4]{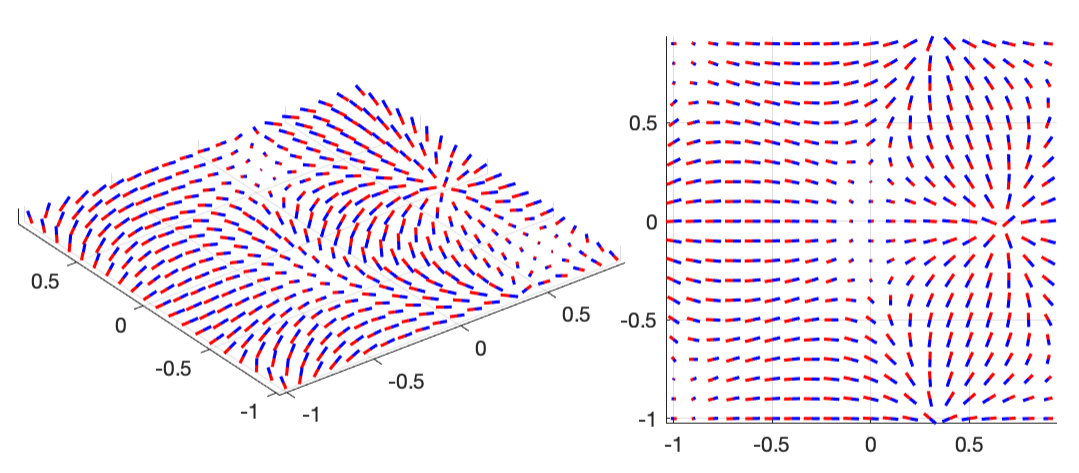}} 
  \subfigure[$t=30$]{  \includegraphics[scale=.4]{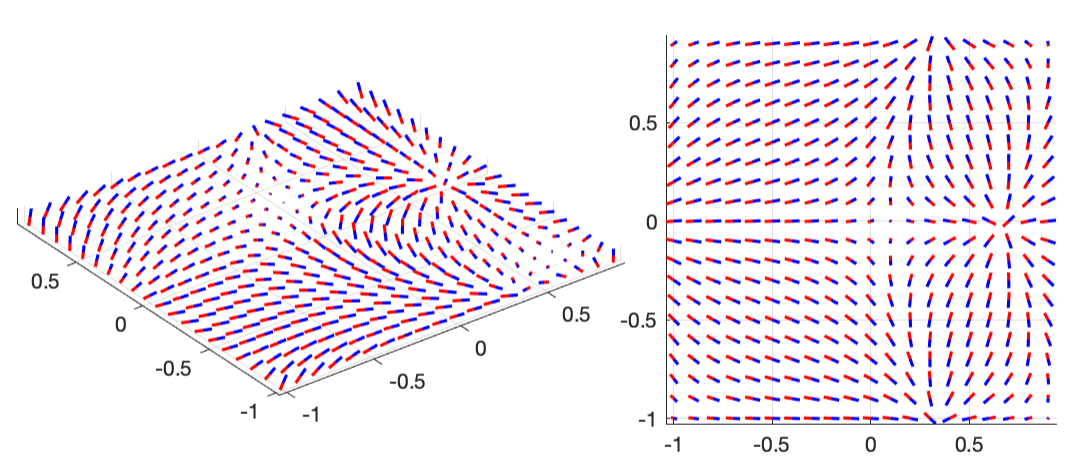}} 
    \subfigure[$t=30.5$]{  \includegraphics[scale=.4]{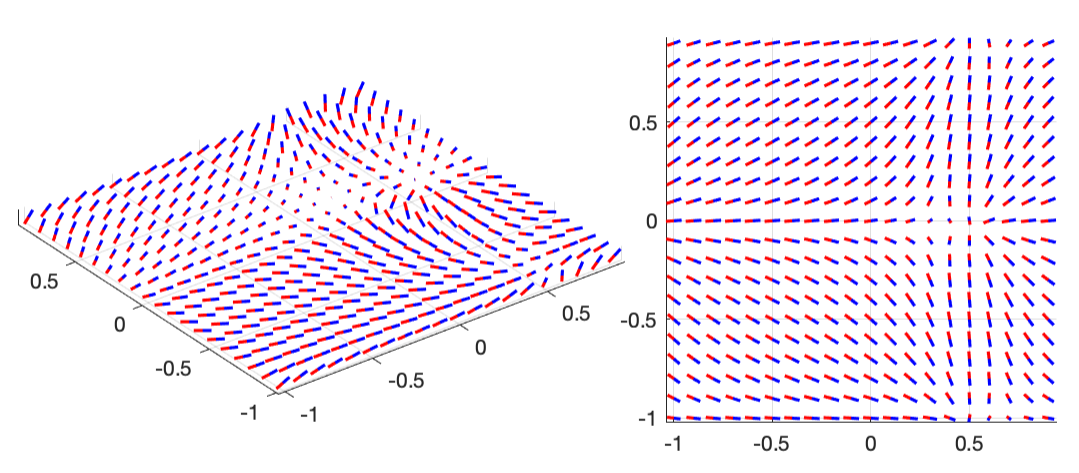}} 
        \subfigure[$t=31.5$]{  \includegraphics[scale=.4]{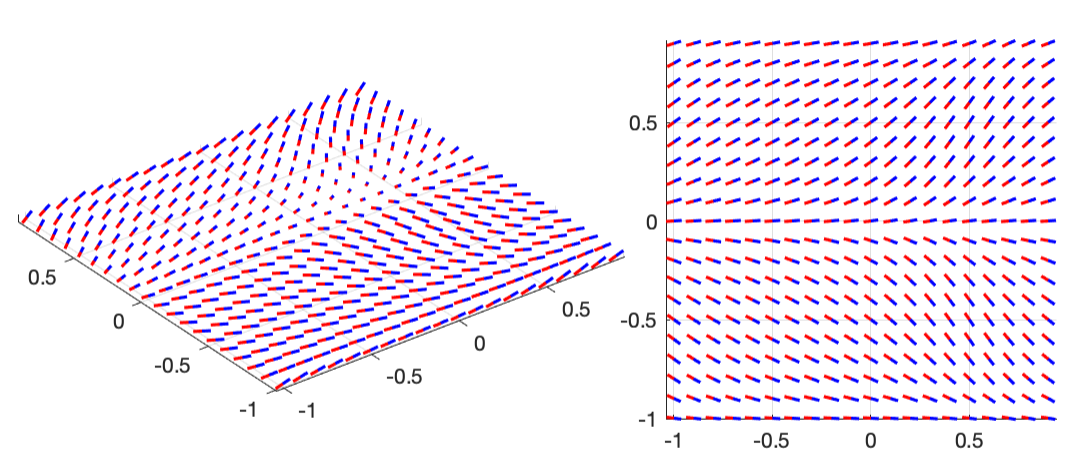}} 
    \subfigure[$t=40$]{  \includegraphics[scale=.4]{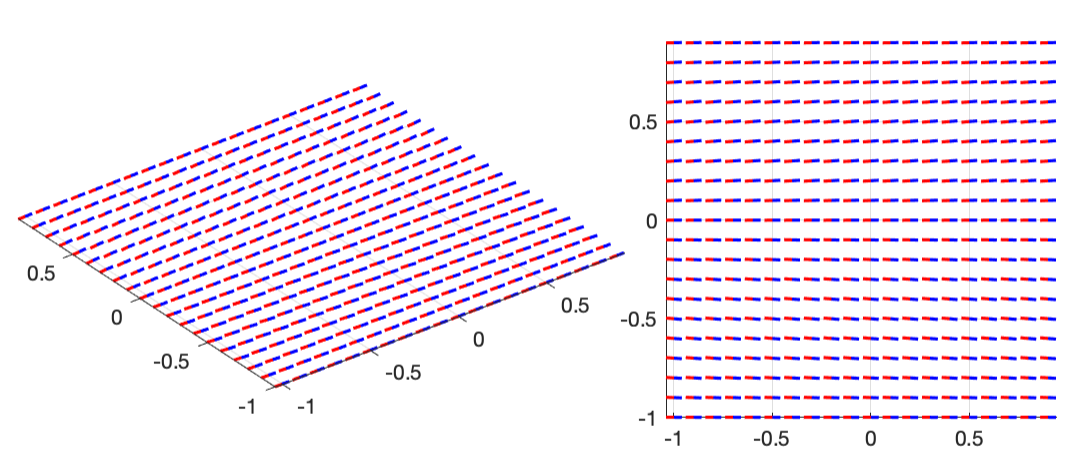}} 
 \caption{ \small Time histories of snapshots of the vector field with initial field distribution \eqref{eq:utest2} and the moduli coefficients $(k_1,k_2,k_3)=(0.08,1,1)$ at (a) $t=10$; (b) $t=29.7$; (c) $t=30$; (d) $t=30.5$; (e) $t=31.5$; (f) $t=40$. }   
 \label{fig: snapshotcase1k1008}
 \end{center}
 \end{figure}

  \begin{figure}[htbp]
 \begin{center}  
  \subfigure[Energy vs Time]{  \includegraphics[scale=.28]{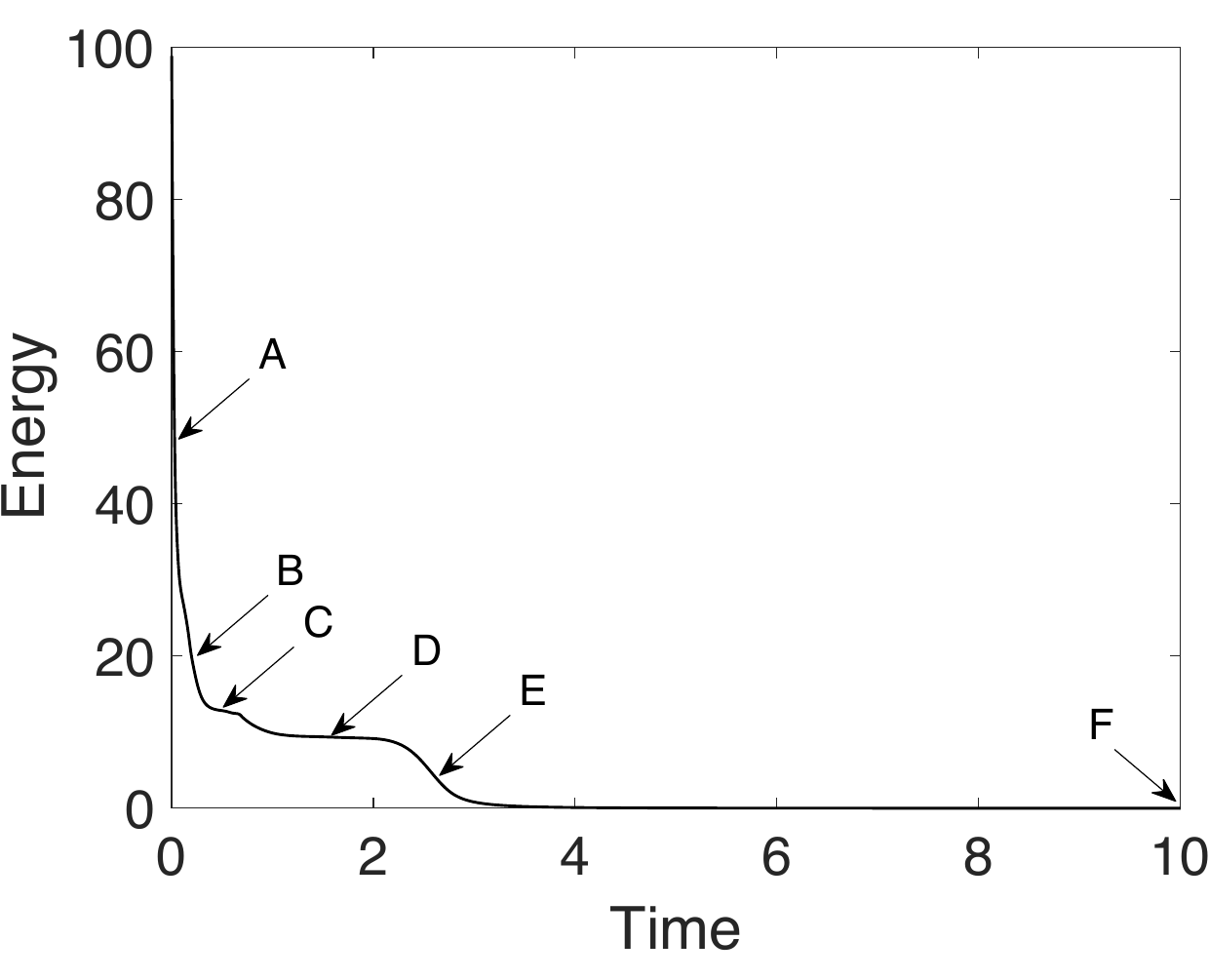}
 }   \qquad  \qquad
 \subfigure[Step size vs Time]{  \includegraphics[scale=.28]{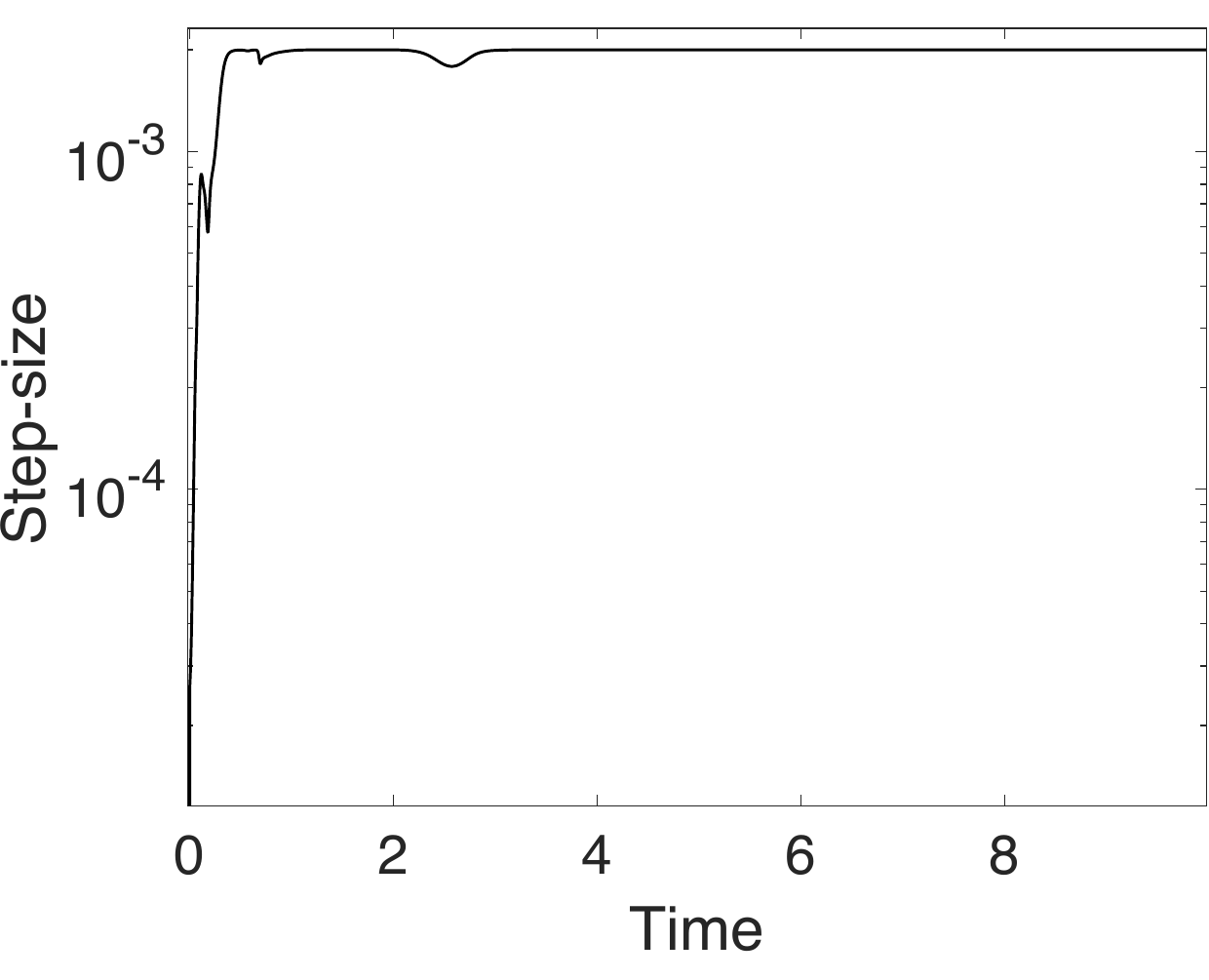}
 } 
 \subfigure[$t=0.0248$]{  \includegraphics[scale=.40]{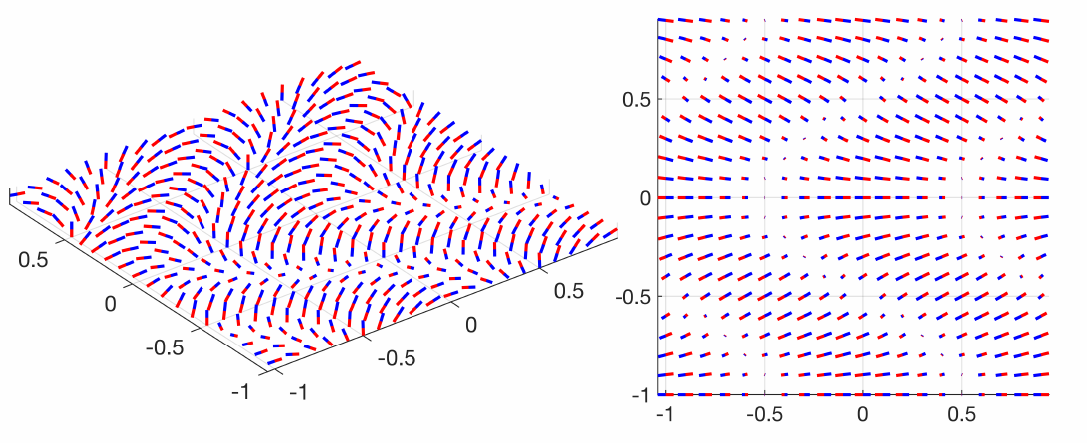}} 
 \subfigure[$t=0.1976$]{  \includegraphics[scale=.40]{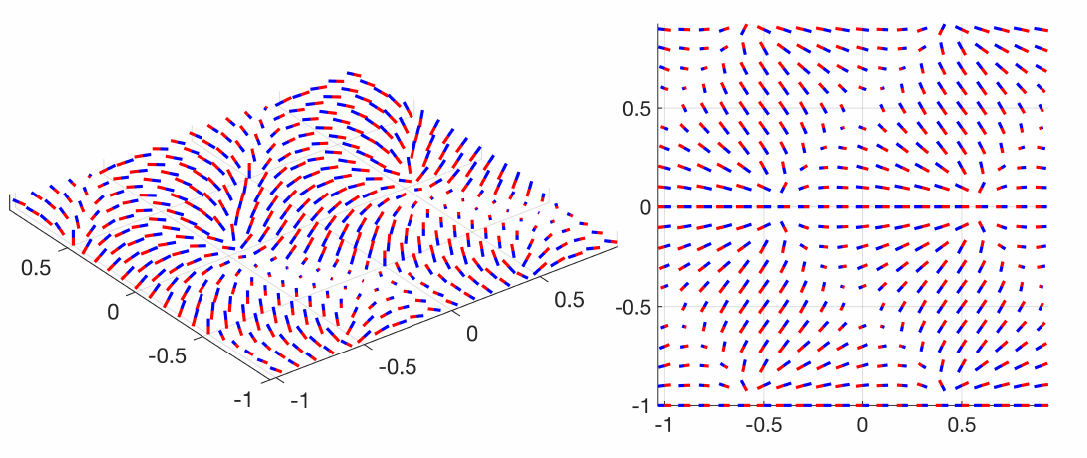} } 
  \subfigure[$t=0.4644$]{  \includegraphics[scale=.40]{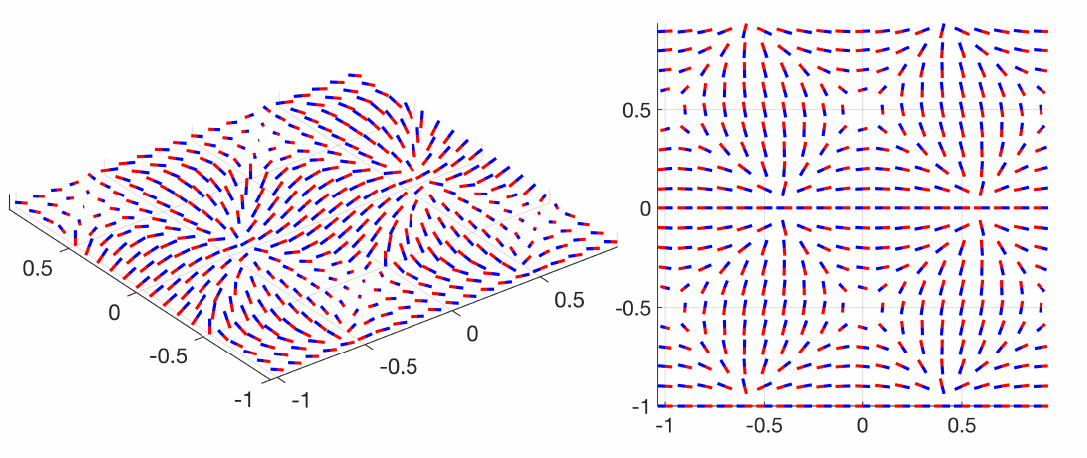} } 
   \subfigure[$t=1.9988$]{  \includegraphics[scale=.40]{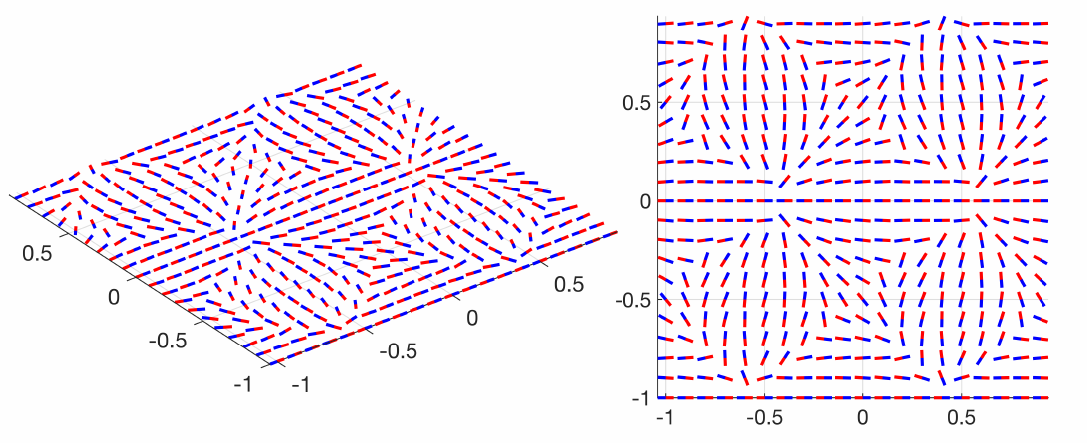} } 
      \subfigure[$t=3.2993$]{  \includegraphics[scale=.40]{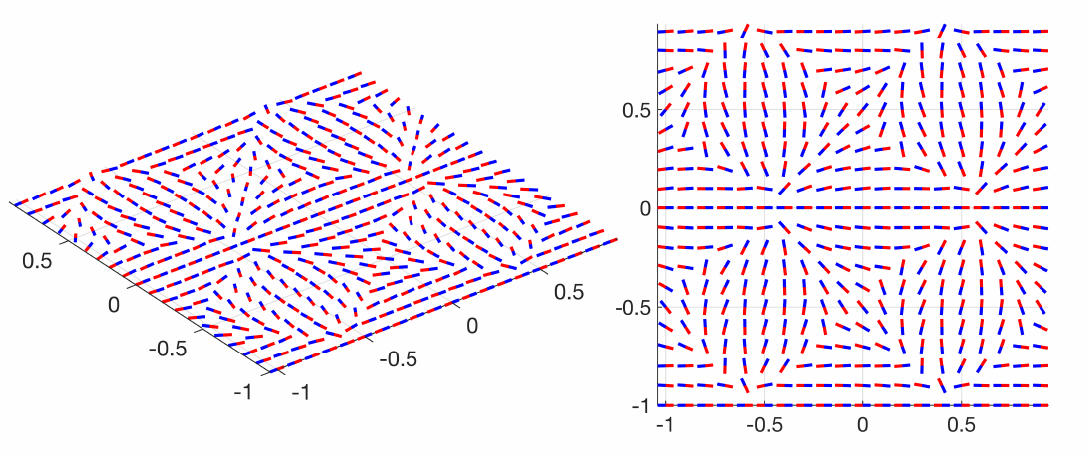} } 
         \subfigure[$t=10$]{  \includegraphics[scale=.40]{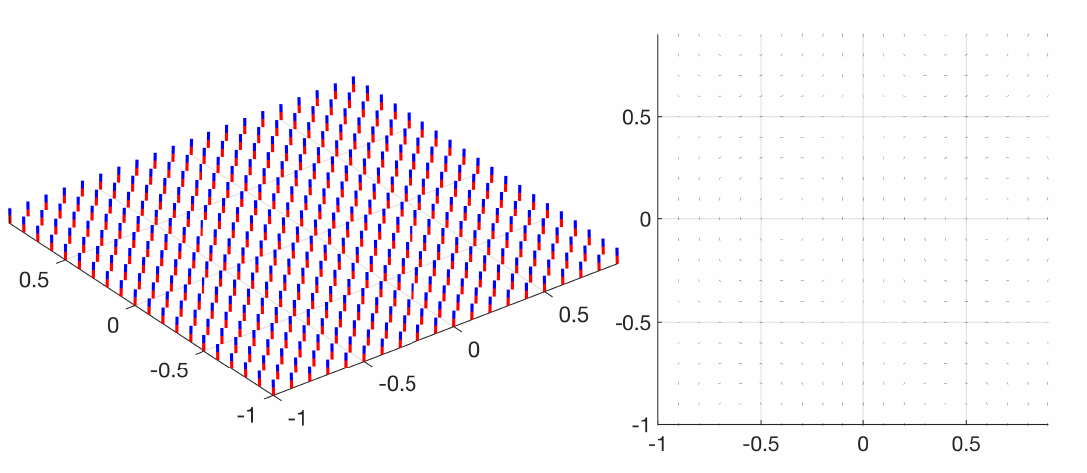} } 
 \caption{\small Evolution of the vector field with initial field distribution \eqref{eq:utest2} and the moduli coefficients $k_1=0.01, k_2=k_3=1$. (a) time history of energy; (b) time history of the step sizes obtained by the time-adaptivity strategy. Time histories of snapshots of the vector field at (c) $t=0.0248$; (d) $t=0.1976$; (e) $t=0.4644$; (f) $t=1.9988$; (g) $t=3.2993$; (h) $t=10$.   }  
  \label{case100111snapshot} 
 \end{center}
 \end{figure}


 \begin{figure}[htbp]

 \begin{center}  
 \subfigure[Energy vs Time]{  \includegraphics[scale=.28]{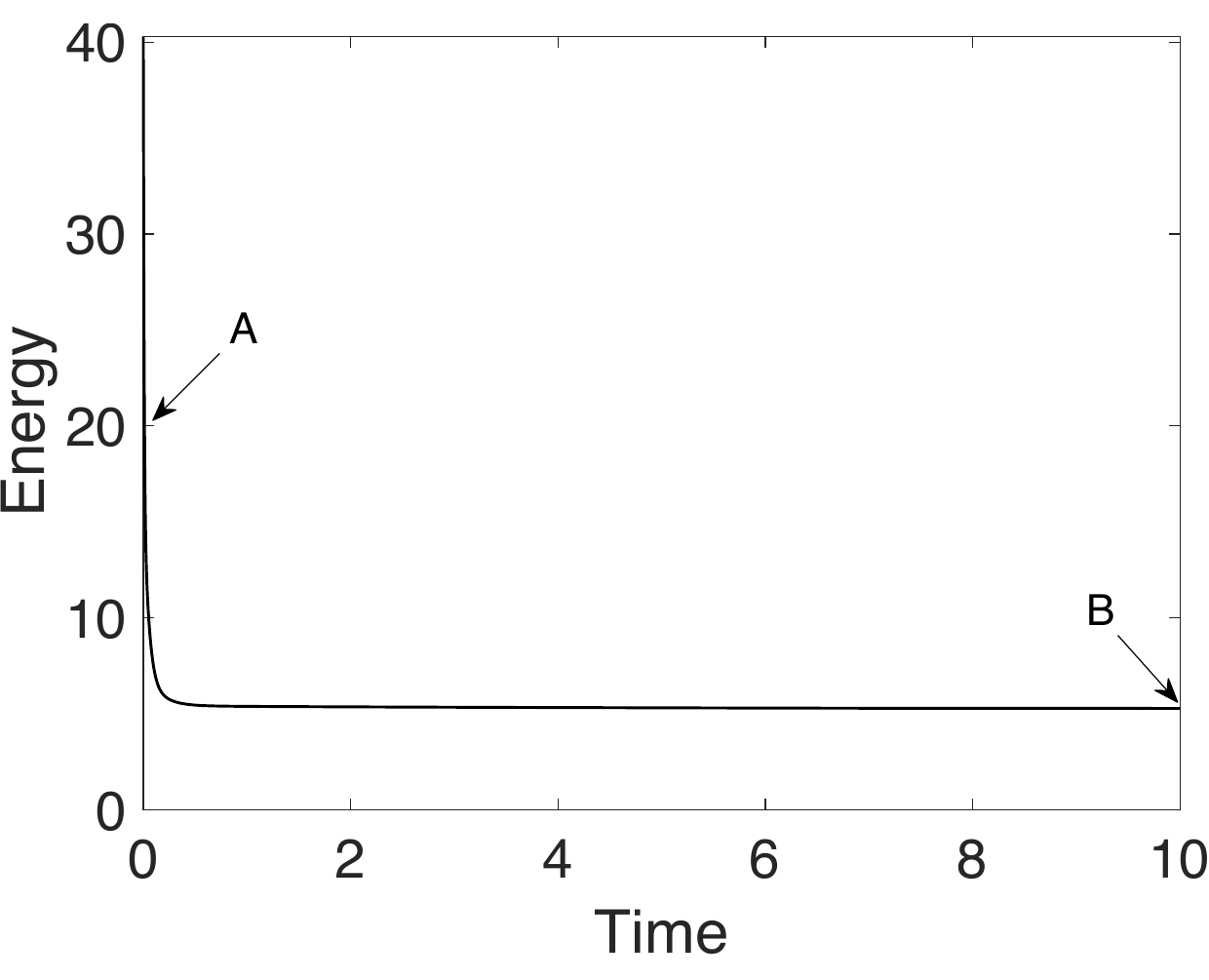} }  \qquad  \qquad
 \subfigure[Step size vs Time]{  \includegraphics[scale=.28]{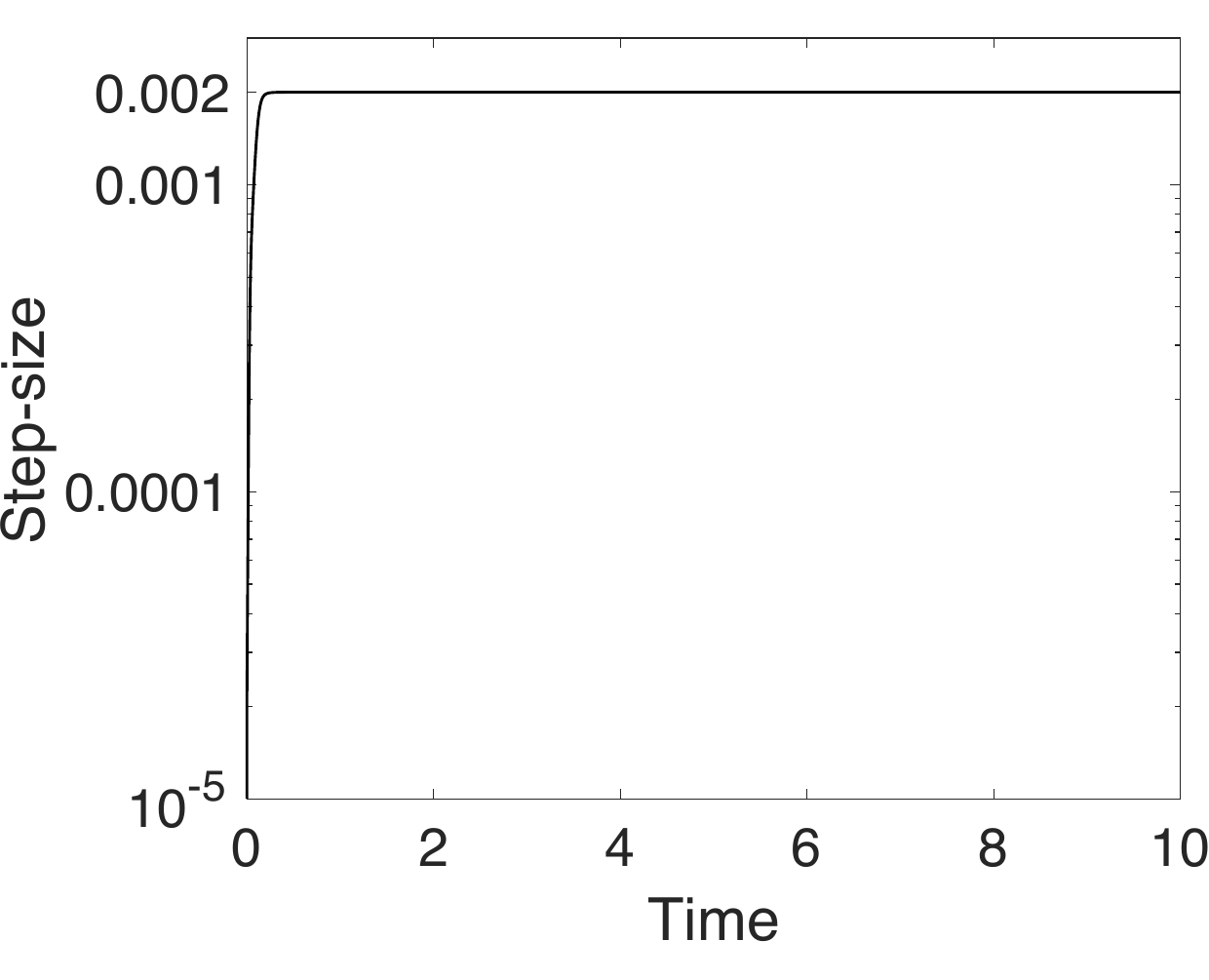} } 
  \subfigure[$t=0.0106$]{  \includegraphics[scale=.40]{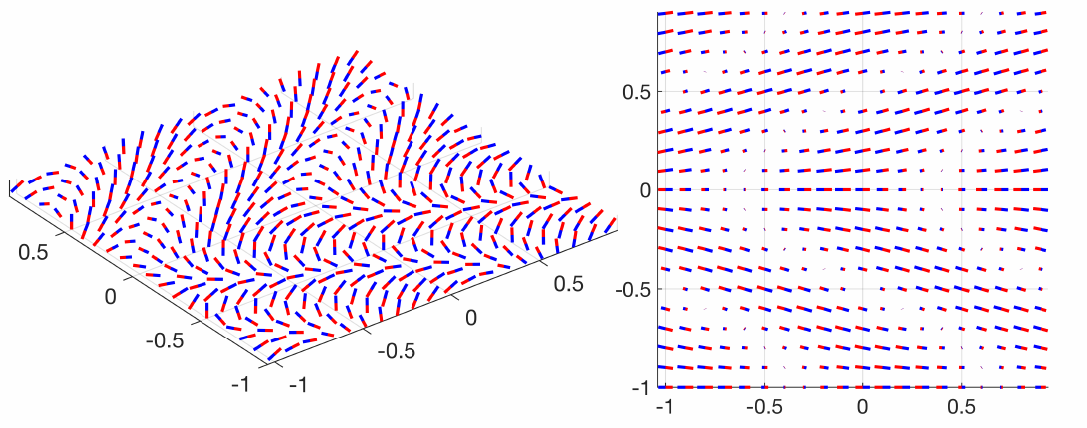}} 
 \subfigure[$t=10$]{  \includegraphics[scale=.40]{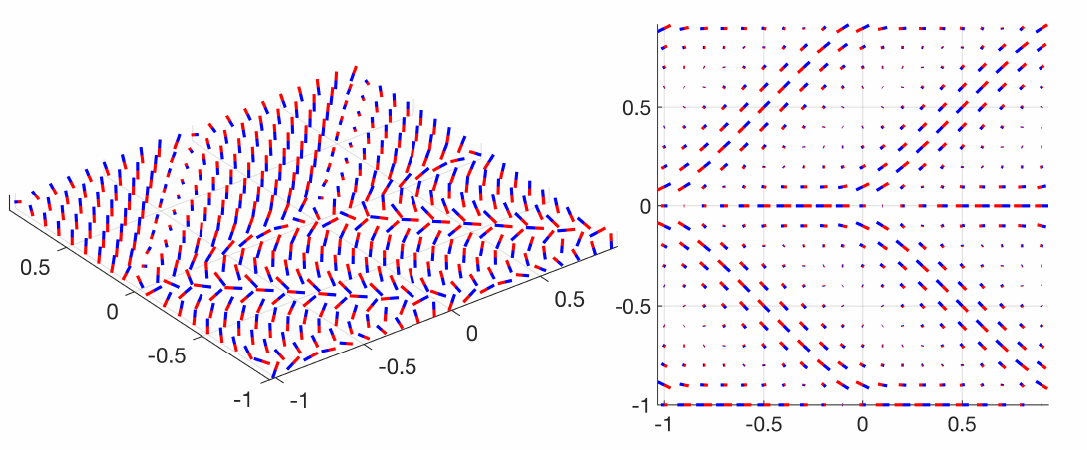} } 
 \caption{\small  Evolution of the vector field with initial field distribution \eqref{eq:utest2} and the moduli coefficients $k_1=k_3=1, k_2=0.01$. (a) time history of energy; (b) time history of the step sizes obtained by the time-adaptivity strategy;  Time histories of snapshots of the vector field at (c) $t=0.0106$ and (d) $t=10$.  }  
  \label{case110011snapshot}
 \end{center}
 \end{figure}

 \begin{figure}[htbp]
 \begin{center}  
 \subfigure[Energy vs Time]{  \includegraphics[scale=.28]{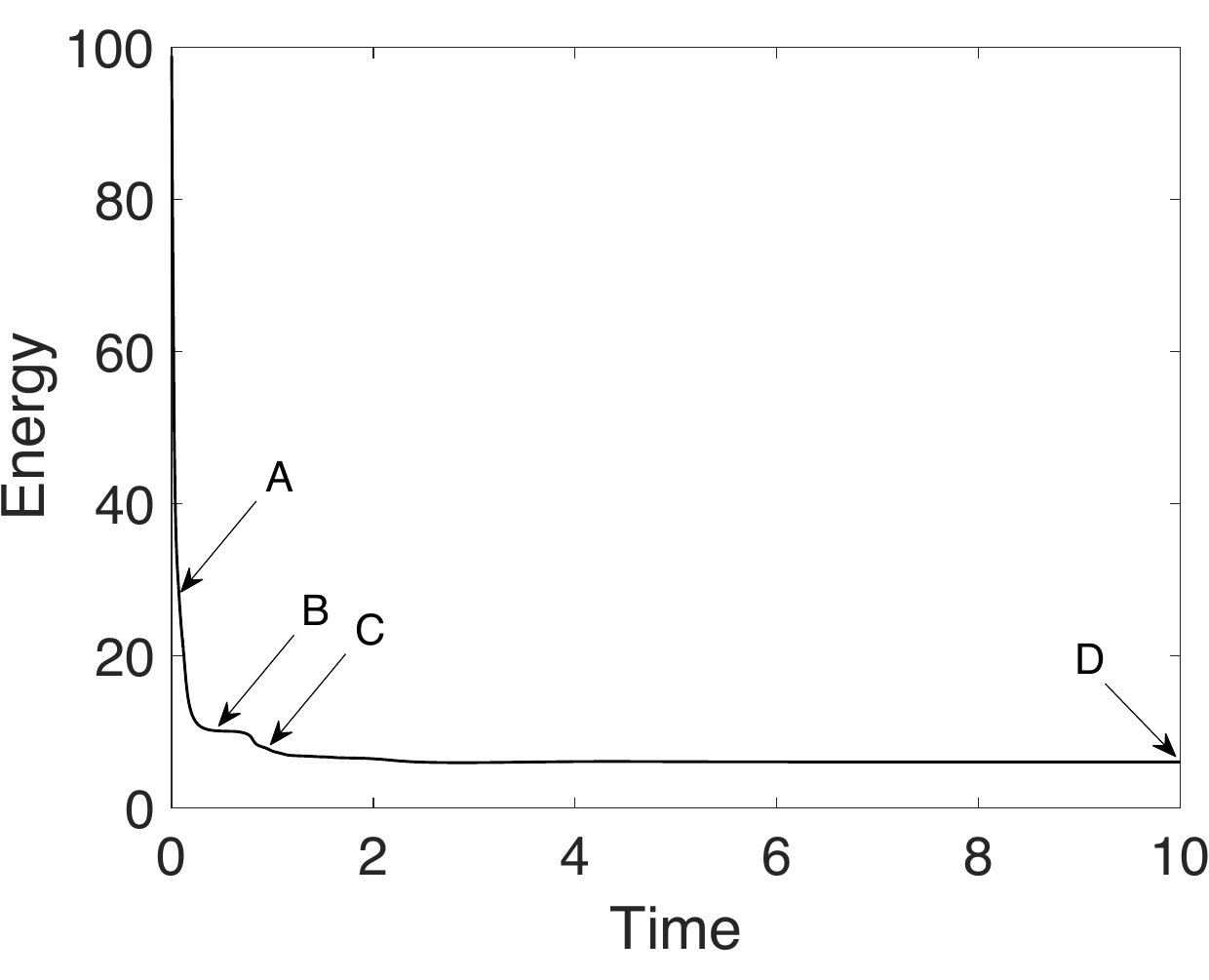} } \qquad  \qquad
 \subfigure[Step size vs Time]{  \includegraphics[scale=.28]{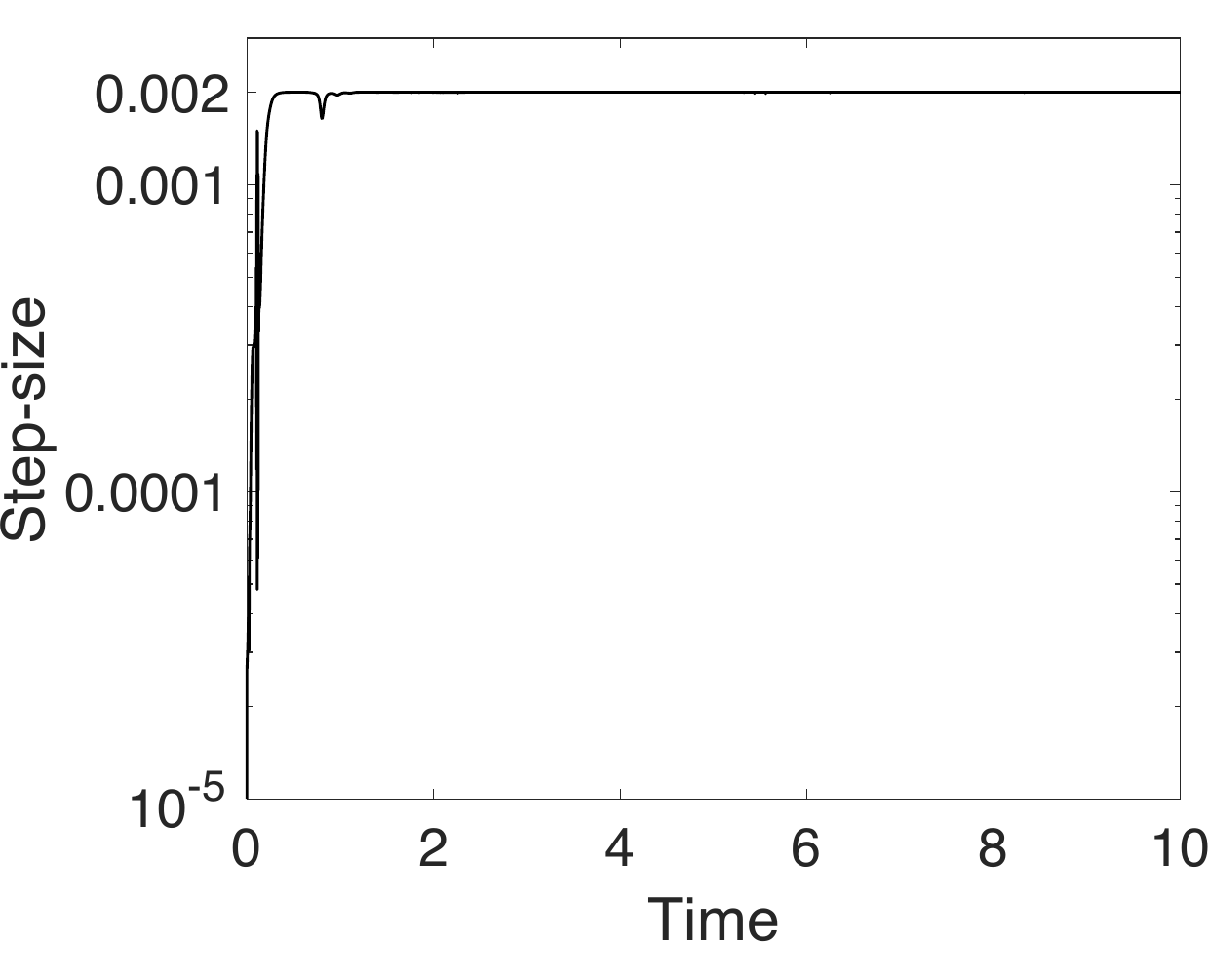} } 
  \subfigure[$t=0.0639$]{  \includegraphics[scale=.40]{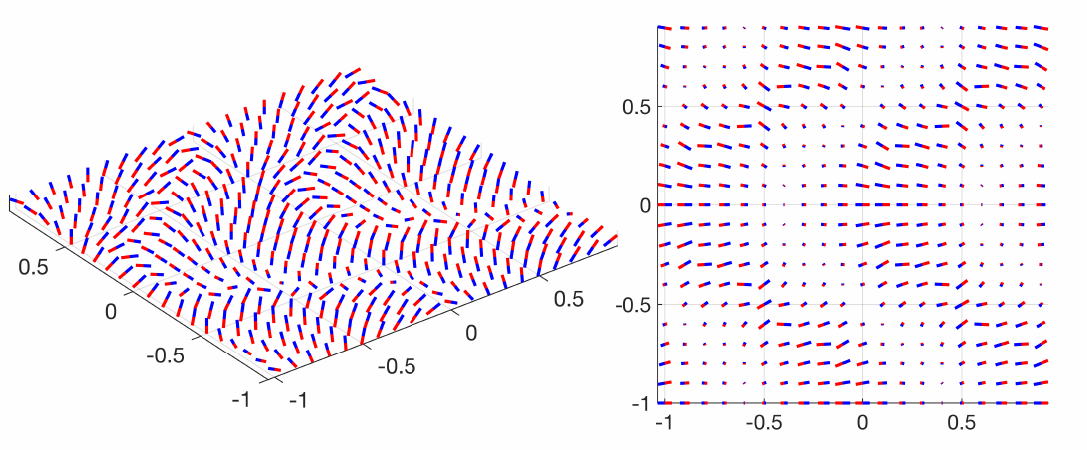}} 
 \subfigure[$t=0.6776$]{  \includegraphics[scale=.40]{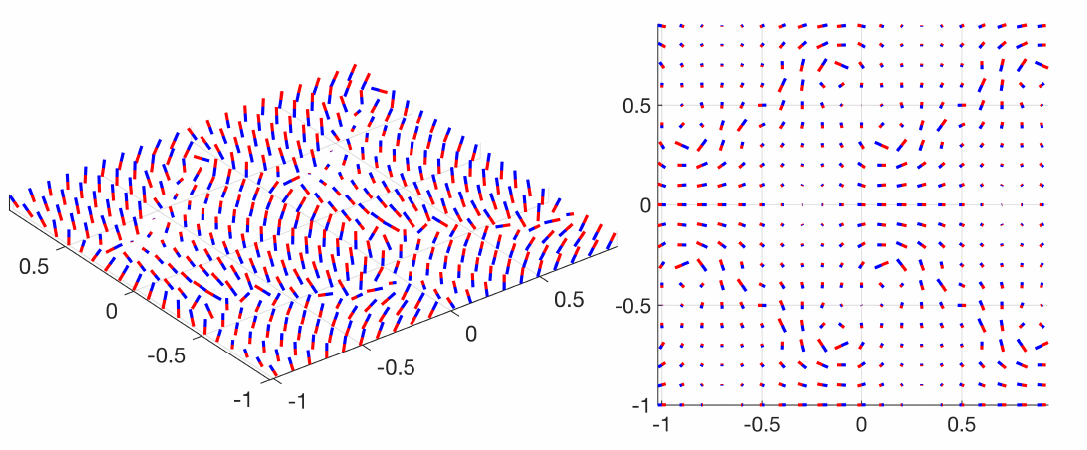} } 
   \subfigure[$t=1.0004$]{  \includegraphics[scale=.40]{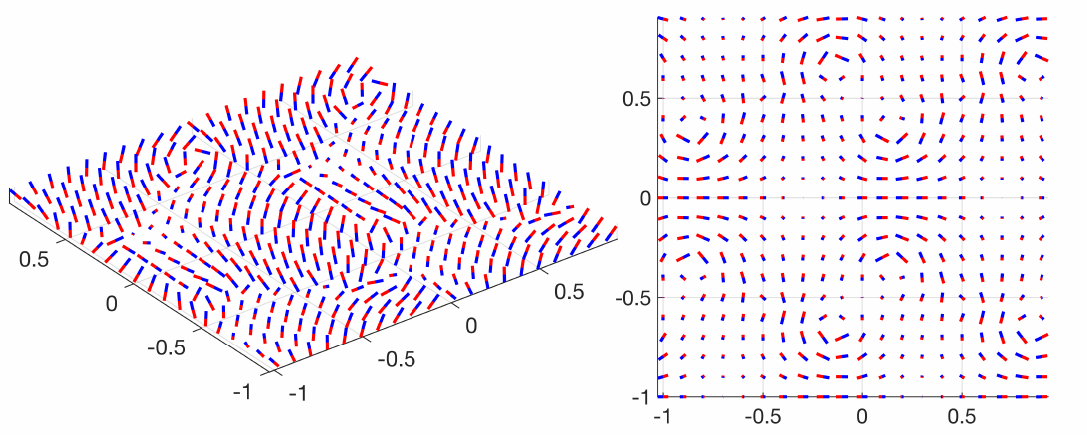}} 
 \subfigure[$t=10$]{  \includegraphics[scale=.40]{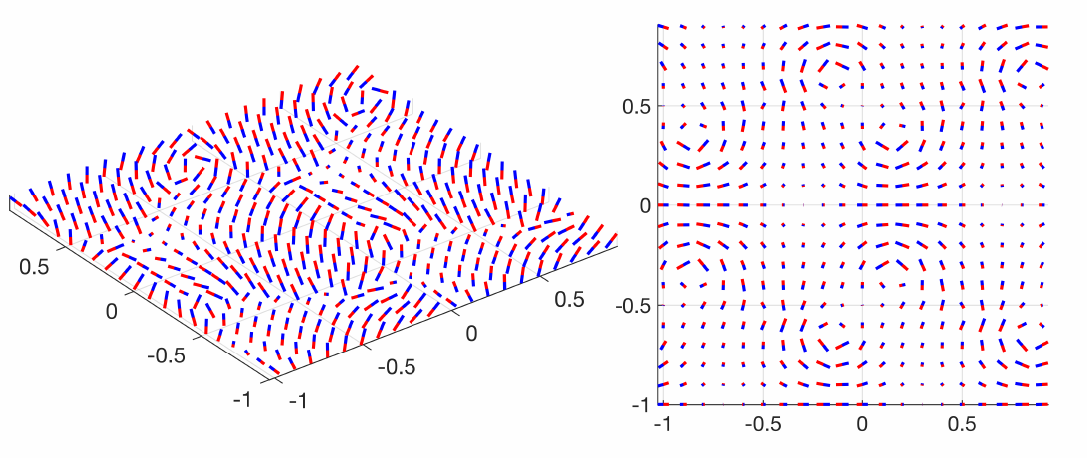} } 
 \caption{\small  Evolution of the vector field with initial field distribution \eqref{eq:utest2} and the moduli coefficients $k_1=k_2=1, k_3=0.01$. (a) time history of energy; (b) time history of the step sizes obtained by the time-adaptivity strategy;  Time histories of snapshots of the vector field at (c) $t=0.0639$; (d) $t=0.6776$; (e) $t=1.0004$ and (f) $t=10$.  }  
  \label{case111001snapshot}
 \end{center}
 \end{figure}

In the following computation, we adopt the OFRdg method with the proposed time-adaptive strategy in Section \ref{sect: adap} to investigate transitions in the dynamics of vector field under various elastic coefficients $k_i$. The initial distribution of the vector field is prescribed as in equation \eqref{eq:utest2}, the time-adaptivity parameters are set to $\tau_{\rm max}=2\times10^{-3}$, $\tau_{\rm min}=10^{-5}$ and $\alpha=10^{-3}$ in equation \eqref{eq: adap} and the simulation settings are kept the same with Section \ref{sect: ppt}.

Figure \ref{fig: snapshotcase1k1} depicts the evolution process of the nematic molecules by a temporal sequence of snapshots of the vector field with $(k_1,k_2,k_3)=(1,1,1)$. Both the side and top views of the vector field in alignment  are provided at time $t=0.1$, $t=1.5$, $t=3.5$ and $t=10$, corresponding to the fast transient stage A, the saddle-point stage B, the transient stage C and the stable equilibrium stage D, as are marked in Figure \eqref{fig: homtest2} (b). It can be observed that the vector field first becomes homogeneous in the $x_2$-direction, with the directions rotated in the $x_1$-$x_3$ plane.
During this period, the $x_2$-component of the vector field maintains zero as it initially stands. 
Then, the $x_2$-component rapidly grows and dominates.
The vector field eventually becomes homogeneous along the $x_2$-direction.  

We decrease the value of $k_1$ and find that the same dynamical behaviors with $k_1=0.5,0.2,0.1$.
However, when $k_1$ decreases to $0.08$, a different dynamics is observed. 
The $x_2$-component emerges at the beginning, reaching the state shown in Figure \ref{fig: snapshotcase1k1008} (a) at which the system remains quite long. 
If we look into the vector field with the top view from the $z$-direction, i.e. the projection of the vector field on the $x_1$-$x_2$ plane, we find a few nodal patterns.
Then, these nodal patterns successively disappear in a relatively short period of time, as can be seen in Figure \ref{fig: snapshotcase1k1008} (b)-(e), before finally arriving at a homogeneous vector field (Figure \ref{fig: snapshotcase1k1008} (f)).
When we further decrease $k_1$ to $0.06$, the state in Figure \ref{fig: snapshotcase1k1008} (a) remain until $t=50$ and perturbations cannot drive the system away from this state, which implies that this state becomes a local energy minimizer.
A smaller value $k_1=0.01$ leads to a more complex dynamical process. Figure \ref{case100111snapshot} (a) depicts the time history of energy as a function of time. We find that the evolution process can be divided into six stages: transient stage A (corresponds to Figure \ref{case100111snapshot} (c));  slightly slower transient stage B (Figure \ref{case100111snapshot} (d)); saddle-point stage C  (Figure \ref{case100111snapshot} (e))  resembling Figure \ref{fig: snapshotcase1k1008} (a); another saddle-point stage D  (Figure \ref{case100111snapshot} (f)) where the vector field mostly lying in the $x_1$-$x_2$ plane; a slow transition stage E (Figure \ref{case100111snapshot} (g)) and global equilibrium stage F (Figure \ref{case100111snapshot} (h)).    

We also investigate the cases where $k_2$ or $k_3$ becomes small. We fix $(k_1,k_2,k_3)=(1,0.01,1)$ and $(k_1,k_2,k_3)=(1,1,0.01).$ In Figure \ref{case110011snapshot} and Figure \ref{case111001snapshot}, respectively and depict the evolution of energy and time histories of snapshots for the vector field and completely different evolution phenomenon can be observed.  Small perturbations on the orientation of the director fields cannot drive the system away from state B (see Figure \ref{case110011snapshot} (d)) and state D (see Figure \ref{case111001snapshot} (d)) for these two cases, respectively, which implies that these states become local energy minimizers. The above examples clearly indicate that the dynamics driven by anisotropic elasticity could be totally distinct, which cannot be covered here and calls for further studies.

Moreover, we plot in Figure \ref{case100111snapshot} (b), Figure \ref{case110011snapshot} (b) and Figure \ref{case111001snapshot} (b) the time step sizes as a function of time for these three cases. It can be observed that the step sizes change adaptively according to the dynamics of the vector field and the step sizes remain $\tau_{\rm max}=2\times 10^{-3}$ for most of the time steps. This clearly demonstrates that the efficiency and accuracy of the proposed OFRdg method with time-adaptive strategy.

\section{Concluding Remarks}\label{sect: concl}
We propose a second-order rotational discrete gradient method for the Oseen--Frank gradient flow, unconditionally preserving the unit vector constraint and energy dissipation.
The scheme is built by rewriting the gradient flow in the rotational form.
A discrete gradient specifially designed for the Oseen--Frank energy is given, which proves to be more efficient and robust than the two general formulae for discrete gradients by several numerical experiments. 
The role of anisotropic elasticity is examined in a couple of cases, where distinct dynamical behaviors are observed.
The effect of anisotropic elasticity on dynamics would be an interesting problem to investigate further, for which the scheme we propose here is believed to be a useful tool.
The scheme is also expected to be a fundamental step for the design of efficient and robust schemes for the Ericksen--Leslie equation involving the incompressible flow, which we aim to study in the near future. 

\section{Appendix: non-periodic boundary conditions}
Let $\bs{n}$ pertubate along $\delta\bs{n}$. The derivative is calculated as 
\begin{align}
  &\lim_{s\to 0} \frac{\mathcal{F}[\bs{n}+s\delta\bs{n}]-\mathcal{F}[\bs{n}]}{s}\nonumber\\
  =\,&\int_{\Omega}k_1(\nabla\cdot\bs{n})(\nabla\cdot\delta\bs{n})+k_2(\bs{n}\cdot \nabla \times \bs{n})(\delta\bs{n}\cdot\nabla \times \bs{n}+\bs{n}\cdot\nabla \times \delta\bs{n})   \nonumber\\
  &\quad +k_3(\bs{n}\times \nabla \times \bs{n})\cdot (\delta\bs{n}\times\nabla \times \bs{n}+\bs{n}\times\nabla\times\delta\bs{n})dV\nonumber\\
  =\,&\int_{\Omega}\delta\bs{n}\cdot\bigg[-k_{1}\nabla(\nabla \cdot \bs{n})+k_{2}\Big(   (\bs{n}\cdot \nabla \times \bs{n})(\nabla \times \bs{n})+   \nabla\times\big((\bs{n}\cdot \nabla \times \bs{ n})\bs{n}  \big)        \Big)\nonumber\\
    &\quad +k_{3}\Big(  (\nabla \times \bs{n})\times (\bs{n}\times \nabla \times \bs{n})+   \nabla\times\big((\bs{n}\times \nabla \times \bs{ n})\times\bs{n}  \big) \Big)\bigg]dV\nonumber\\
  &+\int_{\partial\Omega}k_1(\nabla\cdot\bm{n})\bs{\nu}\cdot\delta\bs{n}
    +k_2(\bs{n}\cdot \nabla \times \bs{n})(\bs{n}\cdot\bs{\nu}\times\delta\bs{n})\nonumber\\
    &\quad +k_3(\bs{n}\times \nabla \times \bs{n})\cdot\big(\bs{n}\times (\bs{\nu} \times \delta\bs{n})\big)dS,
  \label{OSvargen}
\end{align}
where we recall that $\nu$ is the outward normal unit vector. 
Using \eqref{eq:aba} and \eqref{eq: eleid}, the surface integral can be rewritten as 
\begin{align}
  \int_{\partial\Omega}\delta\bs{n}\cdot\Big[k_1(\nabla\cdot\bm{n})\bs{\nu}
    +k_2(\bs{n}\cdot \nabla \times \bs{n})(\bs{n}\times\bs{\nu})
    +k_3\big((\bs{n}\times \nabla \times \bs{n})\times \bs{n}\big)\times\bs{\nu}\Big]dS.\label{OSvarSurf}
\end{align}
Therefore, when Dirichlet boundary conditions are adopted, on $\partial\Omega$ we have $\delta\bs{n}=0$, so that the surface integral vanishes. 
The natural boundary conditions make the surface integral vanish without requiring $\delta\bs{n}=0$. 
Since $\delta\bs{n}$ is perpendicular to $\bs{n}$, the nautral boundary conditions shall be given by 
\begin{align}
  \bs{n}\times \Big[k_1(\nabla\cdot\bm{n})\bs{\nu}
    +k_2(\bs{n}\cdot \nabla \times \bs{n})(\bs{n}\times\bs{\nu})
    +k_3\big((\bs{n}\times \nabla \times \bs{n})\times \bs{n}\big)\times\bs{\nu}\Big]=0.\label{ntrbnd}
\end{align}
Thus, for both Dirichlet boundary conditions and \eqref{ntrbnd}, the energy variation is given by \eqref{variation}.
The derivation of discrete gradient can be done similarly.
In particular, \eqref{ntrbnd} shall be discretized as 
\begin{align}
  \bs{n}^{n+1/2}\times \Big[k_1(\nabla\cdot\bm{n}^{n+1/2})\bs{\nu}
    +k_2\beta^{n+1/2}(\bs{n}^{n+1/2}\times\bs{\nu})
    +k_3\big(\bs{\omega}^{n+1/2}\times \bs{n}^{n+1/2}\big)\times\bs{\nu}\Big]=0,\label{ntrbnd_dis}
\end{align}
in order that the surface integral vanishes. 

\section*{Acknowledgement}
J. Xu acknowledges the support from the NSFC (Nos. 12288201, 12001524). Z. Yang acknowledges the support from the NSFC (No. 12101399), the Shanghai Sailing Program (No. 21YF1421000) and the Fundamental Research Funds for the Central Universities. 
 
 \bibliographystyle{plain}
\bibliography{refpapers}

\end{document}